\tikzstyle{none}=[draw=none]   
\tikzstyle{bigtiparrow}=[->,thick, >=angle 90]
\tikzstyle{bigtiparrow2}=[->,thick, >=angle 90,preaction={draw=white, -,line width=6pt}]
\tikzstyle{lrarrow}=[<->,thick, >=angle 90,preaction={draw=white, -,line width=6pt}]
\tikzstyle{new}=[rectangle,fill=white,draw=white, inner sep=2pt]
\tikzstyle{new2}=[rectangle,fill=white,draw=white, inner sep=6pt]
\DeclareMathOperator{\rng}{\mathrm{rng}}
\DeclareMathOperator{\supp}{\mathrm{supp}}
\DeclareMathOperator{\Cost}{\mathrm{Cost}}
\newcommand{\Stild}{\widetilde{\mathfrak S}}
\newcommand{\MAlg}{\mathrm{MAlg}}
\newcommand{\Aut}{\mathrm{Aut}}
  \newcommand{\N}{\mathbb N}
  \newcommand{\Z}{\mathbb Z}
 \newcommand{\Orb}{\mathrm{Orb}}    
 \newcommand{\dom}{\mathrm{dom}\;}
    \newcommand{\Stab}{\mathrm{Stab}}
  \newcommand{\inv}{^{-1}}
   \newcommand{\Sub}{\mathrm{Sub}}
  \renewcommand{\leq}{\leqslant}
  \renewcommand{\geq}{\geqslant}
  \newcommand{\abs}[1]{\left\lvert #1\right\rvert}
  \newcommand{\act}{\curvearrowright}
  \newcommand{\impl}{\Rightarrow}
  \newcommand{\la}{\left\langle}
  \newcommand{\ra}{\right\rangle}
  \newcommand{\into}{\hookrightarrow}
\newtheorem{thm}{Theorem}[section]
\newtheorem{cor}[thm]{Corollary}
\newtheorem{lem}[thm]{Lemma}
\newtheorem{prop}[thm]{Proposition}
\newtheorem{qu}{Question}
\theoremstyle{definition}
\newtheorem*{claim}{Claim}
\newtheorem{df}[thm]{Definition}
\newtheorem*{rmq}{Remark}
\newtheorem*{ack}{Aknowledgments}
\newtheorem{exemple}[thm]{Example}
\renewcommand*{\thefootnote}{\fnsymbol{footnote}}
\title{Highly faithful actions and dense free subgroups in full groups}
\author[1]{François Le Maître\thanks{Research supported by Projet ANR-14- CE25-0004 GAMME.}}
\date{\vspace{-5ex}}
\begin{document}

\setcounter{footnote}{1}

\maketitle
\renewcommand*{\thefootnote}{\arabic{footnote}}

\begin{abstract}
In this paper, we show that every measure-preserving ergodic equivalence relation of cost less than $m$ comes from a ‘‘rich'' faithful invariant random subgroup of the free group on $m$ generators, strengthening a result of Bowen which had been obtained by a Baire category argument. 

Our proof is completely explicit: we use our previous construction of  topological generators for full groups and observe that these generators induce a totally non free action. We then twist this construction so that the action is moreover amenable onto almost every orbit and highly faithful. 

In particular, we obtain that the full group of a measure-preserving ergodic equivalence of cost less than $m$ contains a dense free subgroup on $m$ generators.
\end{abstract}

\tableofcontents

\section{Introduction}

A natural conjugacy-invariant for a measure-preserving action of a countable group $\Gamma$ on a standard probability space $(X,\mu)$ is the associated \textit{measure-preserving equivalence relation} $\mathcal R_\Gamma$ defined by $(x,y)\in\mathcal R_\Gamma$ if and only if $\Gamma x=\Gamma y$. 
Such equivalence relations are studied up to \textit{orbit equivalence}, that is up to isomorphism and restrictions to full measure sets.

Measure-preserving actions of countable groups are often asked to be \textit{free}: every non-trivial group element fixes almost no point. The study of free measure-preserving actions up to orbit equivalence is well developed and has fruitful connections to measured group theory and von Neumann algebras, see \cite{MR2827853} for a recent overview. 

The theory of \textit{cost}, introduced by Levitt  \cite{MR1366313} and developed by Gaboriau \cite{MR1728876}, has proven to be invaluable in this area. The cost of a measure-preserving equivalence relation $\mathcal R$ is the infimum of the measures of its generating sets\footnote{See section \ref{sec:prelim} for a precise definition.}, thus providing an analogue of the rank of a countable group for measure-preserving equivalence relations. A fundamental theorem due to Gaboriau is that every free action of the free group on $m$ generators induces a measure-preserving equivalence relation of cost $m$ (see \cite[Cor. 1]{MR1728876}). 

In this paper, we are interested in non-free actions of $\mathbb F_m$. Our starting point is the contrapositive of Gaboriau's aforementioned theorem: a measure preserving equivalence relation of  cost less than $m$ cannot come from a free action of $\mathbb F_m$. Moreover, an easy consequence of Gaboriau's results is that \textit{every} ergodic measure-preserving equivalence relation of cost less than $m$ comes from a non-free action of $\mathbb F_m$.

It is then natural to search for some strengthening of non-freeness for $\mathbb F_m$-actions so as to further classify measure-preserving equivalence relations of cost less than $m$. We thus ask:

\begin{qu}\label{qu:mainqu}
Consider a measure-preserving ergodic equivalence relation of cost less than $m$. How non-free can the $\mathbb F_m$-actions that induce it be?
\end{qu}

We now list three ways a measure-preserving action of the free group on $m\geq 2$ generators can be thought of as ‘‘very'' non-free. 
\subsection{Non freeness I: Amenability onto almost every orbit}

\begin{df}\label{df:amenable action}An action of a countable group $\Gamma$ on a set $Y$ is called \textbf{amenable} if it admits a sequence of almost invariant sets, i.e. if there exists a sequence of finite subsets $(F_n)$ of $Y$ such that for all $\gamma\in\Gamma$, 
$$\frac{\abs{\gamma F_n\bigtriangleup F_n}}{\abs{F_n}}\to 0\,[n\to+\infty].$$
A countable group $\Gamma$ is \textbf{amenable} if its left action onto itself by translation is amenable.
\end{df}
The group of integers $\Z$ is a key example of an amenable group (the sequence of intervals $[-n,n]$ is almost invariant). On the other hand, for any $n\geq 2$ the free group $\mathbb F_n$ is \textit{not} amenable: for instance, one can build a \textit{Ponzi scheme} on it (see \cite[Cor. 6.18]{MR1699320}). 

\begin{df}
A measure-preserving action of a countable group $\Gamma$ on a standard probability space $(X,\mu)$ is called \textbf{amenable onto almost every orbit} if for almost every $x\in X$, the $\Gamma$-action on $\Gamma\cdot x$ is amenable.
\end{df}

\begin{exemple}
Suppose that $\Gamma\act(X,\mu)$ is a free measure-preserving action. Then for almost every $x\in X$, the $\Gamma$-equivariant map $\Gamma\to \Gamma\cdot x$ which takes $\gamma\in\Gamma$ to $\gamma\cdot x$ is a bijection. Hence almost all the actions on the orbits are conjugate to the left $\Gamma$-action onto itselft by translation. So a free $\Gamma$-action on $(X,\mu)$ is amenable onto almost every orbit if and only if $\Gamma$ is amenable. \end{exemple}

We deduce from the previous example that for all $n\geq 2$,  a free measure-preserving action of the free group $\mathbb F_n$ is never amenable onto almost every orbit. In particular, measure-preserving actions of $\mathbb F_n$ which \textit{are} amenable onto almost every orbit can be thought of as very non-free actions. Examples of non-amenable measure-preserving equivalence relations coming from  $\mathbb F_m$-actions which are amenable onto almost every orbit were first constructed by Kaimanovich \cite{MR1485618}.

\subsection{Non freeness II: High transitivity onto almost every orbit}

\begin{df}Let $\Gamma$ be a countable group acting on a set $Y$. The action is \textbf{highly transitive} if for every $n\in\N$, the diagonal $\Gamma$-action on the set of $n$-tuples made of pairwise distinct elements of $Y$ is transitive.
 
To be more precise , the action is highly transitive if for every $n\in\N$, every pairwise distinct $y_1,...,y_n\in Y$ and every pairwise distinct $y'_1,...,y'_n\in Y$, there exists $\gamma\in\Gamma$ such that for all $i\in\{1,...,n\}$, we have $\gamma\cdot y_i=y'_i$. 
\end{df}
As an example, the natural action of the group of finitely supported permutations of the integers is highly transitive. It has been an ongoing research theme to understand which countable groups admit faithful highly transitive actions; see \cite{Hull:2015fr} for a striking recent result in that area.

It is a well-known fact that a permutation group $\Gamma\leq\mathfrak S(Y)$ is highly transitive if and only if it is dense for the topology of pointwise convergence. Note that a nontrivial highly transitive action can never be free. The following definition was introduced by Eisenmann and Glasner \cite{Eisenmann:2014oq} and can   also be seen as a strengthening of non-freeness for measure-preserving actions.

\begin{df}Let $\mathcal R$ be a measure-preserving equivalence relation on a standard probability space $(X,\mu)$. A measure-preserving action of a countable group $\Gamma$ on $(X,\mu)$ is \textbf{almost surely highly transitive on $\mathcal R$-classes} if for almost every $x\in X$, $\Gamma$ preserves the equivalence class $[x]_{\mathcal R}$ and acts on it in a highly transitive manner.
\end{df}

There is a very nice sufficient condition for a group to act almost surely highly transitively on $\mathcal R$-classes and to state it we need to introduce full groups.

\begin{df}
Let $\mathcal R$ be a measure-preserving equivalence relation. Its \textbf{full group}, denoted by $[\mathcal R]$, is the group of all measure-preserving Borel bijections $T$ of $(X,\mu)$ such that for all $x\in X$, we have $T(x)\in[x]_{\mathcal R}$. Moreover, two such bijections are identified if they coincide up to a null set.
\end{df}

Whenever $T$ and $U$ are measure-preserving bijections of $(X,\mu)$, one can define the uniform distance between them by 
$$d_u(T,U):=\mu(\{x\in X: T(x)\neq U(x)\}).$$

Whenever $\mathcal R$ is a measure-preserving equivalence relation, the uniform metric induces a complete separable metric on its full group which is thus a Polish group (see e.g. \cite[Prop. 3.2]{MR2583950}).

We may now state Eisenmann and Glasner's result.

\begin{thm}[{\cite[Prop. 1.19]{Eisenmann:2014oq}}]\label{thm:dense implies ht}
    Let $\mathcal R$ be a measure-preserving equivalence relation. Let $\Gamma\leq[\mathcal R]$ be a countable dense subgroup of $[\mathcal R]$. Then $\Gamma$ acts almost surely highly transitively on $\mathcal R$-classes.
\end{thm}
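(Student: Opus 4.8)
The plan is to establish the equivalence between density in the full group and high transitivity on orbits, which is the measurable analogue of the classical fact mentioned just before the statement: a subgroup of $\mathfrak S(Y)$ is dense for pointwise convergence if and only if it is highly transitive. I would first recall that the topology of pointwise convergence on the full group $[\mathcal R]$ is exactly the uniform topology induced by $d_u$. The key translation is that $d_u(T,U)$ being small means $T$ and $U$ agree off a set of small measure, so density of $\Gamma$ in $[\mathcal R]$ says precisely that any full-group element can be approximated arbitrarily well, in measure, by elements of $\Gamma$.

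Next I would reduce the high transitivity statement to the construction of suitable elements of the full group. Fix $n$ and work with $n$-tuples of distinct points. The natural strategy is: given pairwise distinct $y_1,\dots,y_n$ in a single $\mathcal R$-class $[x]_{\mathcal R}$ and pairwise distinct targets $y'_1,\dots,y'_n$ in the same class, I want to produce $\gamma\in\Gamma$ with $\gamma\cdot y_i = y'_i$ for all $i$. First I would exhibit an element $T\in[\mathcal R]$ realizing this finite transposition data on a positive-measure ``gadget'': because all the $y_i$ and $y'_j$ lie in the same equivalence class, one can build a measure-preserving element of the full group that, on a small positive-measure neighborhood structure around these points, implements the desired partial permutation. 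Then density lets me approximate $T$ by some $\gamma\in\Gamma$ with $d_u(\gamma,T)$ small.

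The main obstacle, and the heart of the argument, is passing from an approximation that is good \emph{in measure} to an approximation that is \emph{exactly correct at the specified points} for almost every class simultaneously. A single approximation $\gamma$ with $d_u(\gamma,T)$ small only guarantees $\gamma = T$ off a small-measure set, which need not contain the particular points $y_i$; moreover one must handle the uncountable family of tuples across almost every orbit at once. The way I would resolve this is to fix a countable dense sequence $(T_k)$ in $[\mathcal R]$ realizing the relevant finite patterns, observe that each $T_k$ is approximated by some $\gamma\in\Gamma$ up to arbitrarily small uniform distance, and then use a Borel–Cantelli / full-measure argument: the set of points where $\gamma$ and $T_k$ disagree has small measure, and by choosing approximations summably well and intersecting over a countable generating family of patterns, one secures that for almost every $x$ the subgroup $\Gamma$ realizes every finite transitivity requirement on $[x]_{\mathcal R}$.

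Finally I would assemble these pieces to conclude that for almost every $x$, $\Gamma$ preserves $[x]_{\mathcal R}$ (which is automatic since $\Gamma\leq[\mathcal R]$) and acts highly transitively on it, completing the proof. The conceptual engine throughout is the dictionary ``dense in uniform/pointwise topology $\Leftrightarrow$ highly transitive,'' transported from the discrete setting to the measured one by a careful almost-everywhere argument that upgrades in-measure approximation to pointwise correctness on orbits.
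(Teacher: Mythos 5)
First, a point of comparison: the paper does not actually prove this statement --- it is imported verbatim from Eisenmann--Glasner (their Prop.\ 1.19) and used as a black box, so there is no in-paper proof to measure you against. Judged on its own terms, your outline follows what is essentially the standard argument: realize a finite matching inside an orbit by an element $T$ of the full group, approximate $T$ by $\gamma\in\Gamma$ in $d_u$, and upgrade approximation in measure to exact agreement at the relevant points for almost every $x$ by a countable intersection. That skeleton is right, and your observation that the crux is passing from ``agree off a small set'' to ``agree at the prescribed points of almost every orbit'' correctly identifies where the work is.

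Two things need repair, however. First, the fact you propose to ``recall'' --- that the uniform topology on $[\mathcal R]$ is the topology of pointwise convergence --- is not a meaningful statement here: $[\mathcal R]$ is not a permutation group of a countable set, and the analogy with $\mathfrak S(Y)$ is only heuristic; fortunately your argument only ever uses $d_u$, so this is cosmetic. Second, and more seriously, the step ``fix a countable dense sequence $(T_k)$ realizing the relevant finite patterns'' hides the two genuine difficulties. (i) To make ``countably many patterns'' precise you must parametrize orbits measurably, e.g.\ via Feldman--Moore: write $[x]_{\mathcal R}=\{g_k(x)\}_{k\in\N}$ for a countable group $\{g_k\}\subseteq[\mathcal R]$ generating $\mathcal R$, so that a pattern is a pair of index tuples and the set where it is admissible is Borel. (ii) For a fixed pattern $(k_1,\dots,k_n)\mapsto(l_1,\dots,l_n)$ there is in general \emph{no single} $T\in[\mathcal R]$ with $T(g_{k_i}(x))=g_{l_i}(x)$ on the whole admissible set $A$: if, say, $g_{k_1}(A)$ and $g_{k_2}(A)$ overlap, the requirements $T=g_{l_1}g_{k_1}^{-1}$ and $T=g_{l_2}g_{k_2}^{-1}$ conflict on the overlap. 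One must first shrink to a positive-measure $B\subseteq A$ on which the translates $g_{k_1}(B),\dots,g_{k_n}(B)$ (and likewise the targets) are pairwise disjoint --- this is exactly the role of Lemma \ref{lem: disjoint image} in the paper --- build $T$ there, and then either exhaust $A$ by countably many such gadgets or argue by contradiction that the set of $x\in A$ not served by any $\gamma\in\Gamma$ is null (approximating $T$ with $d_u(\gamma,T)<\mu(B)/2n$ already forces a positive-measure set of good $x$ in $B$). With (i) and (ii) supplied, your $d_u$-approximation and countable-intersection finish is correct and no Borel--Cantelli summability is even needed.
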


It is not true in general that any almost surely highly transitive action comes from a dense embedding into a full group\footnote{To see this, start with $\Gamma\act (X,\mu)$ which is almost surely highly transitive on $\mathcal R$-classes. Then consider the $\Gamma$-action on two disjoint copies of $(X,\mu)$ and let $\mathcal R'$ be the associated equivalence relation. The new action is almost surely highly transitive on $\mathcal R'$-classes, but $\Gamma$ is not dense in $[\mathcal R']$ since any element of the closed subgroup generated by $\Gamma$ has to act the same on the two copies of $(X,\mu)$.}. However in the ergodic case the question of the converse was asked by Eisenmann and Glasner. 

\subsection{Non freeness III: Total non freeness}

To state properly one last possible definition for an action to be very non free, we need to introduce invariant random subgroups, which are important invariants of non-free measure-preserving actions.

Let $\Gamma$ be a countable group. We denote by $\Sub(\Gamma)\subseteq\{0,1\}^\Gamma$ the space of closed subgroups of $\Gamma$, which is a closed subspace of the compact metrizable space $\{0,1\}^\Gamma$ equipped with the product topology. 
With the induced topology, $\Sub(\Gamma)$ is thus a compact metrizable space naturally acted upon by $\Gamma$ via conjugacy: for any $\Lambda\in\Sub(\Gamma)$ and any $\gamma\in\Gamma$, one lets $\gamma\cdot\Lambda:=\gamma\Lambda\gamma\inv$.

\begin{df}
An \textbf{invariant random subgroup} (or IRS) of a countable group $\Gamma$ is a $\Gamma$-invariant Borel probability measure on $\Sub(\Gamma)$.
\end{df}

Let $\Gamma\act(X,\mu)$ be a measure-preserving action. The map $\Stab: X\to \Sub(\Gamma)$ which maps $x\in X$ to $\Stab_\Gamma(x)$ is $\Gamma$-equivariant, so by pushing forward the measure $\mu$ we obtain an IRS $\Stab_*\mu$ of $\Gamma$. Abert, Glasner and Virag have shown that the converse is true: every IRS of $\Gamma$ can be written as $\Stab_*\mu$ for some measure-preserving $\Gamma$-action on $(X,\mu)$ \cite[Prop. 13]{MR3165420}.

\begin{df}[Vershik]
Let $\Gamma\act(X,\mu)$ be a measure-preserving action. It is called \textbf{totally non free} if the map $\Stab:(X,\mu)\to(\Sub(\Gamma),\Stab_*\mu)$ is a conjugacy.
\end{df}

Note that since the map $\Stab$ is $\Gamma$-equivariant, the only thing one has to check in order to know that an action is totally non free is that $\Stab$ becomes injective when restricted to a suitable full measure subset of $X$. In the setting of full groups, our observation is the following.

\begin{prop}[see Proposition \ref{prop:dense implies tnf}]\label{prop:dense implies tnf1}
Let $\mathcal R$ be a measure-preserving aperiodic\footnote{A measure-preserving equivalence relation is aperiodic if almost all its classes are infinite.} equivalence relation. If $\Gamma\leq[\mathcal R]$ is a dense countable subgroup, then the $\Gamma$-action is totally non free.
\end{prop}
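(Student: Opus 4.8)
The plan is to reduce everything to an injectivity statement about the stabiliser map and then to prove it by working inside the measure algebra rather than trying to control the action pointwise. As noted after the definition of total non-freeness, since $\Stab$ is $\Gamma$-equivariant it suffices to show that $\Stab\colon X\to\Sub(\Gamma)$ is injective on a conull set. Observing that the Borel $\sigma$-algebra of $\Sub(\Gamma)\subseteq\{0,1\}^\Gamma$ is generated by the coordinate maps $\Lambda\mapsto\mathbf 1[\gamma\in\Lambda]$, whose pullbacks under $\Stab$ are exactly the fixed-point sets $\mathrm{Fix}(\gamma)=\{x\in X:\gamma\cdot x=x\}$, this is equivalent to showing that the $\sigma$-algebra $\mathcal F$ generated by $\{\mathrm{Fix}(\gamma):\gamma\in\Gamma\}$ is the whole Borel $\sigma$-algebra modulo null sets. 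I would therefore aim to prove that $\{\mathrm{Fix}(\gamma):\gamma\in\Gamma\}$ is dense in the measure algebra $\MAlg(X,\mu)$ for the metric $d_\mu(A,B)=\mu(A\triangle B)$; a dense family of sets generates everything mod null (pass to a fast subsequence and apply Borel--Cantelli), which finishes the proof.

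The first key step is that the assignment $T\mapsto\mathrm{Fix}(T)$ is $1$-Lipschitz from $([\mathcal R],d_u)$ to $(\MAlg,d_\mu)$. Indeed, if $T,U\in[\mathcal R]$ agree off a set $E$ of measure $d_u(T,U)$, then on $E^c$ one has $Tx=x\iff Ux=x$, so $\mathrm{Fix}(T)\triangle\mathrm{Fix}(U)\subseteq E$ and $d_\mu(\mathrm{Fix}(T),\mathrm{Fix}(U))\le d_u(T,U)$. Since $\Gamma$ is $d_u$-dense in $[\mathcal R]$ and a continuous map carries dense sets onto dense subsets of its image, it then suffices to show that $T\mapsto\mathrm{Fix}(T)$ is onto $\MAlg$, i.e. that every measurable $A$ is the fixed-point set of some full group element.

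The construction realising an arbitrary $A$ is where aperiodicity enters, and it is the step I expect to require the most care. Given $A$, I want $T\in[\mathcal R]$ that is the identity on $A$ and fixed-point-free on $A^c$; such a $T$ exists precisely when the restricted relation $\mathcal R|_{A^c}$ is aperiodic, for then it contains an aperiodic (hence a.e. fixed-point-free) automorphism $U$, and $T=\mathrm{id}_A\sqcup U$ works. To see that $\mathcal R|_{A^c}$ is aperiodic, I would fix one aperiodic automorphism $S\in[\mathcal R]$ (available since $\mathcal R$ is aperiodic) and invoke Poincaré recurrence: almost every point of $A^c$ returns to $A^c$ under $S$ infinitely often, so almost every $\mathcal R$-class meets $A^c$ in an infinite set, which is exactly aperiodicity of $\mathcal R|_{A^c}$.

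Combining these steps, $\mathrm{Fix}(\Gamma)$ is $d_\mu$-dense in $\MAlg$, hence $\mathcal F$ is the full Borel $\sigma$-algebra mod null, hence $\Stab$ is injective mod null and the action is totally non free. I would remark that this argument sidesteps the obvious difficulty --- that $d_u$-density gives no pointwise control at the measure-zero sets $\{x\}$, so one cannot directly approximate an element fixing a prescribed point --- by only ever asking for sets of fixed points in the measure algebra; it also recovers, and is in a sense orthogonal to, the same-class separation that would alternatively follow from high transitivity (Theorem \ref{thm:dense implies ht}).
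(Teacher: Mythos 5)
Your argument is correct and is essentially the paper's proof: both reduce total non-freeness to the density in $\MAlg(X,\mu)$ of the family of fixed-point sets (equivalently, their complements the supports $\supp(\gamma)$, which is what the paper uses), obtained from $d_u$-density of $\Gamma$ together with the fact that every measurable set is the support of some element of $[\mathcal R]$. The only difference is that you supply a proof (via Poincar\'e recurrence and an aperiodic element of $[\mathcal R]$) of that last fact, which the paper simply cites as well known.
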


Bowen obtained a satisfactory answer to Question \ref{qu:mainqu} in the context of totally non-free actions: he showed by a Baire category argument that whenever $\mathcal R$ is an ergodic equivalence relation of cost less than $n$, there exists a totally non free action of the free group on $n$ generators which induces the equivalence relation $\mathcal R$ \cite{MR3420547}. We remark that this result can also be obtained by combining the previous proposition with \cite[Thm. 1]{gentopergo}.

\subsection{Statement of the main result}

Our main result is that the above conditions for non-freeness can be achieved all at once along with high faithfulness. The latter is a strengthening of the notion of faithfulness and is somehow dual to high transitivity (see Section \ref{sec: highlyfaith} for more on this notion; our definition differs significantly from the one given by Fima, Moon and Stalder in \cite{MR3411128}).

\begin{df}A transitive action of a countable group $\Gamma$ on a set $Y$ is \textbf{highly faithful} if for all $n\in\N$ and all pairwise distinct $\gamma_1,...,\gamma_n\in\Gamma$, there exists $y\in Y$ such that for all distinct $i,j\in\{1,...,n\}$, we have $\gamma_i\cdot y\neq\gamma_j\cdot y$. 
\end{df}

Note that the natural action of the group of finitely supported permutations of the integers is highly transitive faithful, but not highly faithful. It would be interesting to understand which countable groups admit highly faithful highly transitive actions. 

A measure-preserving action of a countable group is called highly faithful if it is highly faithful onto almost every orbit. Here this notion was useful to us in order to obtain sequences of sets with nice disjointness properties (see item (\ref{cond:nice disjoint sets for all Fn}) in Theorem \ref{thm:cara highly faithful pmp}) and also to produce (highly) faithful actions for some free products via Theorem \ref{thm:highly faithful rf}. We can now state our main result, which upgrades \cite[Thm. 1]{gentopergo}.

\begin{thm}\label{thm:main thm}Let $\mathcal R$ be an ergodic equivalence relation with finite cost. Then for all $m\in\N$ such that $m>\Cost(\mathcal R)$ there is a dense free group on $m$ generators in the full group of $\mathcal R$ whose action is moreover  amenable onto almost every orbit and highly faithful.
\end{thm}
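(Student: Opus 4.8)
The plan is to upgrade the topological generators of \cite[Thm.~1]{gentopergo} rather than build new ones from scratch, harvesting two of the four desired properties for free from density and then installing amenability and high faithfulness by an explicit twist. First I would dispose of the degenerate case: since the conclusion demands highly faithful orbit actions, almost every class must be infinite, so I may assume $\mathcal R$ is aperiodic (an ergodic relation with finite classes has every $\mathbb F_m$-action factoring through a finite symmetric group, and the statement is then vacuous). For $m>\Cost(\mathcal R)$, \cite[Thm.~1]{gentopergo} furnishes $m$ elements that topologically generate $[\mathcal R]$. The key observation is that \emph{any} tuple $S_1,\dots,S_m$ generating a dense subgroup $\Gamma\leq[\mathcal R]$ already supplies most of what is needed: Theorem~\ref{thm:dense implies ht} makes $\Gamma$ act almost surely highly transitively on $\mathcal R$-classes (so its orbits are the classes and it genuinely realizes $\mathcal R$), Proposition~\ref{prop:dense implies tnf1} makes the action totally non free, and if moreover the $\mathbb F_m$-action is highly faithful then it is faithful, whence the $S_i$ freely generate $\mathbb F_m$. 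Thus the whole problem reduces to producing \emph{topological} generators whose induced $\mathbb F_m$-action is amenable onto almost every orbit and highly faithful.

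It clarifies matters to phrase the two remaining goals through stabilizers. As the action will be totally non free, the orbit of almost every $x$ is equivariantly $\mathbb F_m/\Stab(x)$, so by the standard equivalence between amenability of a coset action and co-amenability of the subgroup, ``amenable onto almost every orbit'' becomes ``$\Stab(x)$ is co-amenable in $\mathbb F_m$ for almost every $x$'', while ``highly faithful onto almost every orbit'' becomes ``for almost every $x$ and every finite $D\subseteq\mathbb F_m\setminus\{e\}$ some conjugate of $\Stab(x)$ avoids $D$''. I would therefore aim to realize $\mathcal R$ by a dense copy of $\mathbb F_m$ in $[\mathcal R]$ whose stabilizer IRS is almost surely co-amenable yet finitely avoidable up to conjugacy. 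Crucially, co-amenability of the stabilizers does \emph{not} force $\mathcal R$ to be amenable: the witnessing Følner sets are produced orbit by orbit and need not depend measurably on $x$, which is exactly the Kaimanovich phenomenon, and this is what makes the two conditions consistent.

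To install these properties I would twist the generators of \cite[Thm.~1]{gentopergo}. For amenability I would use aperiodicity to build, via Rokhlin towers, an increasing exhaustion of almost every orbit by finite ``blocks'' and arrange the twisted generators to carry each block almost into itself; on almost every orbit these blocks then restrict to a Følner sequence for $S_1,\dots,S_m$, forcing the stabilizers to be co-amenable. For high faithfulness I would realize $\mathbb F_m$ as a free product and assemble a highly faithful action from highly faithful pieces using Theorem~\ref{thm:highly faithful rf}, exploiting the disjointness of the translates of the blocks supplied by item~(\ref{cond:nice disjoint sets for all Fn}) of Theorem~\ref{thm:cara highly faithful pmp}. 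Throughout, the twist must be kept small enough in the uniform metric $d_u$ that the uniform closure $\overline{\langle S_1,\dots,S_m\rangle}=[\mathcal R]$ is preserved.

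The main obstacle is the tension between density and amenability onto orbits: density forces almost sure high transitivity, so each orbit action is as rich as a dense subgroup of the full symmetric group of the class, whereas amenability demands co-amenable stabilizers and hence comparatively sparse, Følner Schreier graphs. These can coexist only because they live at different scales—density is a global statement in the uniform topology on $[\mathcal R]$, whereas amenability and high faithfulness are per-orbit combinatorial statements whose witnessing sets may be chosen non-measurably—and the delicate point is to show that imposing the block structure rigidly enough to force co-amenable, finitely avoidable stabilizers does not shrink the uniform closure of $\langle S_1,\dots,S_m\rangle$ below all of $[\mathcal R]$. I expect verifying that the twisted generators remain topological generators, simultaneously with the disjointness needed for high faithfulness, to be the heart of the argument.
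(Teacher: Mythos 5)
Your high-level skeleton matches the paper's: both start from the topological generators of \cite[Thm.~1]{gentopergo}, harvest total non-freeness and high transitivity from density, use Theorem \ref{thm:highly faithful rf} together with item (\ref{cond:nice disjoint sets for all Fn}) of Theorem \ref{thm:cara highly faithful pmp} to install high faithfulness, and get amenability onto orbits from a block structure along the orbits of an odometer. But there is a genuine gap at what you yourself identify as the heart of the argument, namely why the twisted elements are still topological generators. You propose to keep the twist ``small enough in the uniform metric $d_u$ that the uniform closure is preserved.'' No such continuity argument is available: topological generation is not stable under small perturbations of the generators (density of $\overline{\langle S_1,\dots,S_m\rangle}$ is not an open condition in $[\mathcal R]^m$), and the paper never argues this way. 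Instead it preserves density \emph{exactly}, by an arithmetic trick: the twisting elements $V_i$ coming from Theorem \ref{thm:highly faithful rf} are built from a residually $q$-finite exhaustion of $\mathbb F_m$ so that all their orbits have cardinality a power of an odd prime $q$, while the cycles $C_{\tilde\Phi_i}$ have orbits of cardinality dividing $p+2$ and $U$ has orbits of $2$-power cardinality, with $q$, $p+2$ and $2$ pairwise coprime and all supports disjoint. Corollary \ref{cor:obtain elts with disjt sup} then recovers each factor inside the closed group generated by the product $UV_1C_{\tilde\Phi_1}$ (resp.\ $V_iC_{\tilde\Phi_i}$), so the original generating set is recaptured verbatim. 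Without this (or some substitute), your plan does not close.

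Two further points you would need to supply. First, the support of $U$ must be arranged to be essentially disjoint from the blocks $\bigsqcup_i T_0^i(A_n)$ on which the $\mathbb F_m$-action lives; this is exactly why the paper proves the ``with error'' version of the hyperfinite topological generator theorem (Theorem \ref{thm:topogenR0}), allowing $U$ to be trimmed on small invariant sets $B_k$ without losing density of $\langle T_0,U\rangle$ in $[\mathcal R_0]$. Your proposal assumes the generators of \cite{gentopergo} can be twisted as given, but they cannot without this extra flexibility. Second, your amenability mechanism (``blocks carried almost into themselves'') is vaguer than, and different from, the paper's: there the $m$ twisted generators act \emph{trivially} on reserved sub-blocks $\bigsqcup_{i=-n}^{n}T_0^i(A''_n)$, so almost every Schreier graph contains arbitrarily long $T_0$-intervals, which are the F{\o}lner sets; nothing needs to be ``almost'' preserved, and no non-measurable choice of F{\o}lner sets is involved. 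Your reformulation via co-amenable stabilizer IRSs is correct but does no work here.
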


As a corollary, we can strengthen Bowen's Theorem and generalize a result that Eisenmann and Glasner had obtained for cost 1 ergodic measure-preserving equivalence relations by a Baire category argument \cite[Cor. 21]{Eisenmann:2014oq}.

\begin{cor}\label{thm:main cor}Let $\mathcal R$ be an ergodic equivalence relation with finite cost. Then for all $m\in\N$ such that $m>\Cost(\mathcal R)$ there is a totally non-free highly faithful action of $\mathbb F_m$ which induces the equivalence relation $\mathcal R$ and which is highly transitive and amenable onto almost every orbit.
\end{cor}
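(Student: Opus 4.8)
The plan is to derive the corollary by feeding the free subgroup produced by Theorem \ref{thm:main thm} into the two reconstruction results stated above, namely Theorem \ref{thm:dense implies ht} and Proposition \ref{prop:dense implies tnf1}; since the substantive work has already been done in Theorem \ref{thm:main thm}, what remains is essentially bookkeeping. Concretely, I would first apply Theorem \ref{thm:main thm} to the given ergodic relation $\mathcal R$ and to an integer $m > \Cost(\mathcal R)$ to obtain a countable dense subgroup $\Gamma \leq [\mathcal R]$ that is free on $m$ generators and whose tautological action on $(X,\mu)$ is amenable onto almost every orbit and highly faithful. Fixing a free generating set identifies $\Gamma$ with $\mathbb F_m$, and I regard the inclusion $\Gamma \leq [\mathcal R]$ as defining a measure-preserving $\mathbb F_m$-action on $(X,\mu)$.

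Next I would verify the listed properties one at a time. Amenability onto almost every orbit and high faithfulness are inherited verbatim from Theorem \ref{thm:main thm}. High transitivity onto almost every orbit is exactly the conclusion of Theorem \ref{thm:dense implies ht}, which applies because $\Gamma$ is dense in $[\mathcal R]$. To see that this action induces $\mathcal R$, note that $\Gamma \leq [\mathcal R]$ forces every $\Gamma$-orbit to lie inside a single $\mathcal R$-class, so $\mathcal R_\Gamma \subseteq \mathcal R$; conversely, high transitivity on $\mathcal R$-classes specializes to $1$-transitivity, i.e. for almost every $x$ the group $\Gamma$ acts transitively on $[x]_{\mathcal R}$, whence $\Gamma\cdot x = [x]_{\mathcal R}$ and $\mathcal R \subseteq \mathcal R_\Gamma$ up to a null set. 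Thus $\mathcal R_\Gamma = \mathcal R$, and in particular the $\Gamma$-orbits agree almost everywhere with the $\mathcal R$-classes, so the orbit-wise and class-wise formulations of the properties above coincide.

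It remains to obtain total non-freeness from Proposition \ref{prop:dense implies tnf1}, whose only hypothesis beyond density is that $\mathcal R$ be aperiodic; establishing this is the one point that calls for a short argument rather than a direct citation. Here I would exploit high faithfulness: a highly faithful transitive action of the infinite group $\mathbb F_m$ can never take place on a finite set, since if some orbit had $k < \infty$ points, then choosing $k+1$ pairwise distinct elements of $\mathbb F_m$ would contradict the defining property of high faithfulness. Hence almost every orbit --- equivalently, by the previous paragraph, almost every $\mathcal R$-class --- is infinite, so $\mathcal R$ is aperiodic. Proposition \ref{prop:dense implies tnf1} then yields that the $\mathbb F_m$-action is totally non free, and assembling the four properties completes the proof. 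The upshot is that the corollary carries no genuine difficulty of its own: the entire burden rests on the construction behind Theorem \ref{thm:main thm}, and the role of this argument is only to repackage density and high faithfulness into the language of invariant random subgroups and orbit transitivity.
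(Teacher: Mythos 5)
Your proposal is correct and follows essentially the same route as the paper: apply Theorem \ref{thm:main thm} to get a dense free subgroup of $[\mathcal R]$, then invoke Theorem \ref{thm:dense implies ht} for high transitivity and Proposition \ref{prop:dense implies tnf1} for total non-freeness, with high faithfulness and amenability on orbits inherited directly. The only difference is that you explicitly verify the aperiodicity hypothesis (via high faithfulness) and that the action induces $\mathcal R$ (via transitivity on classes), two points the paper's two-line proof leaves implicit; both of your verifications are sound.
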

\begin{proof}
By Proposition \ref{prop:dense implies tnf1} and Theorem \ref{thm:dense implies ht}, the $\mathbb F_m$-action obtained via Theorem \ref{thm:main thm} is totally non free and highly transitive onto almost every orbit. Since it is also highly faithful and amenable onto almost every orbit, we are done.
\end{proof}

Since total non-freeness implies that the stabiliser map is an isomorphism, the above result implies the following statement about invariant random subgroups (see \cite{Eisenmann:2014oq} for the definitions of the terms used thereafter). 

\begin{cor}\label{thm:main cor IRS}Let $\mathcal R$ be an ergodic equivalence relation with finite cost. Then for all $m\in\N$ such that $m>\Cost(\mathcal R)$ there is an IRS of $\mathbb F_m$ which induces the equivalence relation $\mathcal R$ and which is core-free, co-highly transitive and co-amenable.
\end{cor}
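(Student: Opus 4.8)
The plan is to derive Corollary~\ref{thm:main cor IRS} directly from Corollary~\ref{thm:main cor} by translating each property of the action into the corresponding property of the associated invariant random subgroup. Recall that Corollary~\ref{thm:main cor} produces a totally non-free, highly faithful action of $\mathbb F_m$ on $(X,\mu)$ that induces $\mathcal R$, is highly transitive and amenable onto almost every orbit. We define the IRS in question to be $\Stab_*\mu$, the pushforward of $\mu$ under the stabiliser map $\Stab\colon X\to\Sub(\mathbb F_m)$. Since the action induces $\mathcal R$, the equivalence relation associated to this IRS (in the sense of the Schreier-graph construction from \cite{Eisenmann:2014oq}) is exactly $\mathcal R$, which gives the first assertion.

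It then remains to match up the three qualitative properties. First I would recall from \cite{Eisenmann:2014oq} the dictionary between orbit-wise properties of the action and co-properties of the IRS: the action on $[x]_{\mathcal R}$ is naturally isomorphic, as a $\mathbb F_m$-set, to the coset space $\mathbb F_m/\Stab_{\mathbb F_m}(x)$, so the transitive $\mathbb F_m$-action on the orbit is the same data as the action on the cosets of the sampled subgroup. Under this identification, \emph{co-amenability} of the IRS means precisely that $\mathbb F_m$ acts amenably on $\mathbb F_m/\Lambda$ for $\Stab_*\mu$-almost every $\Lambda$, and \emph{co-high-transitivity} means it acts highly transitively there; these are exactly amenability and high transitivity onto almost every orbit of the action. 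Thus both properties transfer immediately from the conclusion of Corollary~\ref{thm:main cor}.

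For \emph{core-freeness}, I would use that total non-freeness forces the stabiliser map to be a conjugacy (an isomorphism of measure-preserving actions) between $(X,\mu)$ and $(\Sub(\mathbb F_m),\Stab_*\mu)$. If the IRS had a nontrivial core, i.e. if $\bigcap_{\gamma}\gamma\Lambda\gamma\inv$ were almost surely nontrivial, then by invariance there would be a fixed nontrivial element $g\in\mathbb F_m$ lying in $\Lambda$ for almost every sampled $\Lambda$; equivalently $g$ would stabilise almost every point of $X$, contradicting faithfulness of the action (which follows from high faithfulness, or already from the fact that the action induces the aperiodic relation $\mathcal R$ via nontrivial free-group elements). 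So the IRS is core-free. I expect the only genuinely delicate point to be bookkeeping the precise definitions of ``core-free'', ``co-amenable'' and ``co-highly transitive'' as set out in \cite{Eisenmann:2014oq} and checking that the almost-everywhere orbit properties of the action line up exactly with these co-properties; once that dictionary is pinned down, each implication is essentially a restatement, and the proof is short.
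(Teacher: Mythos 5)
Your argument is correct and matches the paper's, whose entire proof of this corollary is the one-line remark preceding it: total non-freeness makes the stabiliser map an isomorphism, under which the orbit-wise properties of the action from Corollary \ref{thm:main cor} become exactly the corresponding co-properties of the IRS $\Stab_*\mu$ (with core-freeness coming from high faithfulness, as you say). The only point worth relocating is that total non-freeness is needed not just for core-freeness but already for your first assertion, namely to identify the equivalence relation induced by the IRS --- that is, by the conjugation action on $(\Sub(\mathbb F_m),\Stab_*\mu)$ --- with $\mathcal R$ itself.
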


All the above results admit non-ergodic counterparts where we require $\mathcal R$ to be aperiodic and  its conditional cost  to be almost surely less than $m$. However supposing that the equivalence relation is ergodic makes proofs much lighter and we hope this will help convey the ideas of this work. The interested reader will be able to ‘‘convert'' the proofs presented here to their non-ergodic analogues by a careful reading of \cite{lm14nonerg}.

We now give an outline of this paper. The next section is devoted to notation and the proof of Proposition \ref{prop:dense implies tnf1}. In Section \ref{sec: highlyfaith} we introduce and study high faithfulness. In Section \ref{sec: rf groups and hf} we build highly faithful actions of free products $\Gamma*\Lambda$ where $\Gamma$ already acts highly faithfully and $\Lambda$ is any residually finite group. Section \ref{sec:top gen hyperfinite} is devoted to a flexible construction of topological generators for the full group of the hyperfinite ergodic equivalence relation. Theorem \ref{thm:main thm} is finally proven in Section \ref{sec:pf of main thm}.

\begin{ack} I would like to thank the anonymous referee for her or his helpful comments.
\end{ack}

\section{Preliminaries}\label{sec:prelim}

Let $(X,\mu)$ be a standard probability space. We will always work modulo sets of measure zero. Let us first briefly review some notation and definitions.

We denote by $\Aut(X,\mu)$ the group of all measure-preserving Borel bijections of $(X,\mu)$. Given $T\in\Aut(X,\mu)$, its \textbf{support} is the set $\supp T:=\{x\in X: T(x)\neq x\}$.

Let $A$  and $B$ be Borel subsets of $X$, a \textbf{partial isomorphism} of $(X,\mu)$ of \textbf{domain} $A$ and \textbf{range} $B$ is a Borel bijection $f: A\to B$ which is measure-preserving for the measures induced by $\mu$ on $A$ and $B$ respectively. A \textbf{graphing} is a countable set $\Phi=\{\varphi_1,...,\varphi_k,...\}$ where the $\varphi_k$'s are  partial isomorphisms. It \textbf{generates} a \textbf{measure-preserving equivalence relation} $\mathcal R_\Phi$, defined to be the smallest equivalence relation containing the graphs of the partial isomorphisms belonging to $\Phi$.  The \textbf{cost} of a graphing $\Phi$ is the sum of the measures of the domains of the partial isomorphisms it contains. The \textbf{cost} of a measure-preserving equivalence relation $\mathcal R$ is the infimum of the costs of the graphings that generate it, we denote it by $\Cost(\mathcal R)$. The cost of $\mathcal R$ is \textbf{attained} if there exists a graphing $\Phi$ which generates $\mathcal R$ such that $\Cost(\Phi)=\Cost(\mathcal R)$. We refer the reader to the lectures notes by Gaboriau\footnote{These are available online at \url{http://perso.ens-lyon.fr/gaboriau/Travaux-Publi/Copenhagen/Copenhagen-Lectures.html}.} for an efficient overview of cost theory. 

The \textbf{full group} of $\mathcal R$ is the group $[\mathcal R]$ of automorphisms of $(X,\mu)$ which preserve the $\mathcal R$-classes, that is
$$[\mathcal R]=\{\varphi\in\Aut(X,\mu): \forall x\in X, \varphi(x)\,\mathcal R\, x\}.$$
It is a Polish group when equipped with the complete biinvariant metric $d_u$ defined by $$d_u(T,U)=\mu(\{x\in X: T(x)\neq U(x)\}.$$ One also defines the \textbf{pseudo full group} of $\mathcal R$, denoted by $[[\mathcal R]]$, which consists of all partial isomorphisms $\varphi$ such that $\varphi(x)\, \mathcal R \, x$ for all $x\in \dom\varphi$.

Let $p\in\N$. A \textbf{pre}-$p$-\textbf{cycle} is a graphing $\Phi=\{\varphi_1,...,\varphi_{p-1}\}$ such that  the following two conditions  are satisfied:
\begin{enumerate}[(i)]\item  $\forall i\in\{1,...,p-2\}, \rng\varphi_i=\dom\varphi_{i+1}$.
\item The following sets are all disjoint:
 $$\dom\varphi_1, \dom \varphi_2,...,\dom\varphi_{p-1},\rng\varphi_{p-1}.$$
\end{enumerate}
A $p$\textbf{-cycle} is an element $C\in \Aut(X,\mu)$ whose orbits have cardinality $1$ or $p$. 

Given a pre-$p$-cycle $\Phi=\{\varphi_1,...,\varphi_{p-1}\}$, we can extend it to a $p$-cycle $C_\Phi\in\Aut(X,\mu)$ as follows:
$$C_\Phi(x)=\left\{\begin{array}{ll}\varphi_i(x) & \text{if }x\in \dom\varphi_i\text{ for some }i<p, \\
\varphi_1\inv\varphi_2\inv\cdots\varphi_{p-1}\inv(x) &\text{if }x\in \rng\varphi_{p-1},\\
x & \text{otherwise.}\end{array}\right.$$

Say that a measure-preserving equivalence relation $\mathcal R$ is \textbf{ergodic} when every Borel $\mathcal R$-saturated set has measure $0$ or $1$. The following standard fact about ergodic measure-preserving equivalence relations is the main source of pre-$p$-cycles, and hence of $p$-cycles.

\begin{prop}[see e.g. \cite{MR2095154}, lemma 7.10.]\label{isopar}
Let $\mathcal R$ be an ergodic measure-preserving equivalence relation on $(X,\mu)$, let $A$ and $B$ be two Borel subsets of $X$ such that $\mu(A)=\mu(B)$. Then there exists $\varphi\in[[\mathcal R]]$ of domain $A$ and range $B$.
\end{prop}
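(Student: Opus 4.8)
The plan is to build $\varphi$ by a quantitative greedy exhaustion, using ergodicity to guarantee that no mass is left unmatched. First I would dispose of the trivial reductions. If $\mu(B)=0$ then $\mu(A)=0$ and the empty partial isomorphism works, so I assume $\mu(A)=\mu(B)>0$. The identity on $A\cap B$ already lies in $[[\mathcal R]]$ (since $x\,\mathcal R\,x$), so it suffices to produce a partial isomorphism between the disjoint sets $A\setminus B$ and $B\setminus A$, which again have equal measure. Hence I may assume from now on that $A$ and $B$ are disjoint.

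Next I would fix a countable family $\{g_n\}_{n\in\N}\subseteq\Aut(X,\mu)$ generating $\mathcal R$, so that $\mathcal R=\bigcup_n \mathrm{graph}(g_n)$ (such a family exists by Feldman--Moore). The value of this choice is that every $\mathcal R$-matching between subsets can be realised piecewise by the $g_n$: for any Borel $C\subseteq X$, the restriction $g_n|_C$ is automatically an element of $[[\mathcal R]]$ with range $g_n(C)$, because each $g_n$ is an injective measure-preserving bijection moving points inside their $\mathcal R$-class.

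I would then construct an increasing sequence $\psi_0\subseteq\psi_1\subseteq\cdots$ in $[[\mathcal R]]$ with $\dom\psi_k\subseteq A$ and $\rng\psi_k\subseteq B$, starting from $\psi_0=\emptyset$. Writing $A_k=A\setminus\dom\psi_k$ and $B_k=B\setminus\rng\psi_k$, I let $s_k$ be the supremum of $\mu(C)$ over all Borel $C\subseteq A_k$ such that $g_n(C)\subseteq B_k$ for some $n$, and I extend $\psi_k$ by some such piece $g_n|_{C_k}$ of measure at least $s_k/2$. Because $C_k\subseteq A_k$ and $g_n(C_k)\subseteq B_k$ are disjoint from $\dom\psi_k$ and $\rng\psi_k$ respectively, the union $\psi_{k+1}=\psi_k\cup g_n|_{C_k}$ is again an honest (injective, measure-preserving) partial isomorphism. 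Since $\mu(A)=\mu(B)$ and the $g_n$ preserve $\mu$, we have $\mu(A_k)=\mu(B_k)$ throughout; the grabbed domains are pairwise disjoint with total measure at most $\mu(A)<\infty$, so $\mu(C_k)\to0$ and therefore $s_k\to0$. Setting $A_\infty=\bigcap_k A_k$, $B_\infty=\bigcap_k B_k$ and $\psi_\infty=\bigcup_k\psi_k$, the limit $\psi_\infty$ is a partial isomorphism in $[[\mathcal R]]$ with domain $A\setminus A_\infty$ and range $B\setminus B_\infty$.

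The crux is to show $\mu(A_\infty)=0$. Suppose not; then $\mu(A_\infty)=\mu(B_\infty)>0$. By ergodicity the $\mathcal R$-saturation of $A_\infty$ is conull, so $B_\infty$ meets it in positive measure, which means the sets $D_n=\{y\in B_\infty: g_n\inv(y)\in A_\infty\}$ cover a positive-measure part of $B_\infty$; hence some $D_n$ has positive measure. Then $g_n\inv(D_n)\subseteq A_\infty\subseteq A_k$ has positive measure and $g_n$ sends it into $B_\infty\subseteq B_k$ for every $k$, forcing $s_k\geq\mu(D_n)>0$ for all $k$ and contradicting $s_k\to0$. Thus $\mu(A_\infty)=\mu(B_\infty)=0$, and $\psi_\infty$ together with the identity on $A\cap B$ gives the desired $\varphi$. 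The main obstacle I anticipate is the bookkeeping needed to keep every $\psi_k$ a genuine \emph{measurable, injective, measure-preserving} partial isomorphism rather than a mere set-theoretic matching; the quantitative greedy step (grabbing measure $\geq s_k/2$) is exactly what lets me avoid a non-measurable Zorn's-lemma maximal element while still forcing the leftover to be null.
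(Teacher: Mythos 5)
Your proof is correct. The paper offers no proof of this proposition and simply cites it to the reference (Kechris--Miller, Lemma 7.10); your greedy exhaustion combined with the ergodicity argument for showing the leftover sets are null is essentially the standard argument given there, so there is nothing further to compare.
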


The following theorem is fundamental for building dense subgroups of full groups.
\begin{thm}[\cite{MR2599891}, Thm. 4.7]\label{ktdense}
Let $\mathcal R_1$, $\mathcal R_2$,... be measure-preserving equivalence relations on $(X,\mu)$, and let $\mathcal R$ be their join (i.e. the smallest equivalence relation containing all of them). Then $\la\bigcup_{n\in\N}[\mathcal R_n]\ra$ is dense in $[\mathcal R]$.
\end{thm}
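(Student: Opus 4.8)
The plan is to fix $T\in[\mathcal R]$ and $\epsilon>0$ and to produce an element $S\in\la\bigcup_n[\mathcal R_n]\ra$ with $d_u(S,T)<\epsilon$; since $T$ was arbitrary this shows the subgroup is dense. The whole argument is a two-step reduction (to a finite join, then to a join of two relations) followed by a word-length decomposition of $T$ into moves within individual $\mathcal R_j$-classes.

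First I would reduce to a finite join. Writing $\mathcal S_N:=\mathcal R_1\vee\cdots\vee\mathcal R_N$, these increase to $\mathcal R$, so for almost every $x$ the pair $(x,T(x))$, which lies in $\mathcal R$, already lies in some $\mathcal S_N$. Hence the sets $A_N=\{x:(x,T(x))\in\mathcal S_N\}$ increase to a conull set and I may pick $N$ with $\mu(A_N)>1-\epsilon/2$. The restriction $T|_{A_N}$ is a partial isomorphism in $[[\mathcal S_N]]$ whose domain $A_N$ and range $T(A_N)$ meet every $\mathcal S_N$-class in sets of equal conditional measure (because $T$ preserves $\mathcal S_N$-classes on $A_N$ and is measure-preserving); a standard extension lemma, namely the relative non-ergodic form of Proposition \ref{isopar}, then lets me extend it to a genuine $T'\in[\mathcal S_N]$ with $T'=T$ on $A_N$, so that $d_u(T,T')<\epsilon/2$. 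It therefore suffices to approximate elements of $[\mathcal S_N]$ by $\la[\mathcal R_1],\dots,[\mathcal R_N]\ra$, and by induction this reduces to the case of two relations: showing that $\la[\mathcal R_1],[\mathcal R_2]\ra$ is dense in $[\mathcal R_1\vee\mathcal R_2]$.

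For two relations I would use the word-length function $w(x)$, the least $k$ for which there is a path $x=z_0,z_1,\dots,z_k=T(x)$ with each consecutive pair lying in $\mathcal R_1$ or $\mathcal R_2$; this is a Borel function, finite almost everywhere, so I can choose $k$ with $\mu(\{w>k\})<\epsilon$ and aim for an $S\in\la[\mathcal R_1],[\mathcal R_2]\ra$ agreeing with $T$ on $\{w\le k\}$. I would build $S$ by peeling the chosen geodesics from the far end: after a measurable selection of geodesics, let $V_1$ send, for each $x$ with $w(x)\geq 1$, the point $T(x)$ to the preceding geodesic vertex $z_{k-1}(x)$. This recipe is well defined because $T$ is injective, so $T(x)$ determines $x$ and hence $z_{k-1}(x)$, and the move $z_k\mapsto z_{k-1}$ stays inside a single $\mathcal R_j$-class. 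Partitioning $T(\{w\geq 1\})$ according to which relation the last move uses and extending to full-group elements, one arranges that $V_1\circ T$ has word length at most $k-1$ on $\{w\le k\}$. Iterating $k$ times drives the word length to $0$ there, so $V_k\circ\cdots\circ V_1\circ T=\mathrm{id}$ on $\{w\le k\}$, whence $T=V_1\inv\circ\cdots\circ V_k\inv$ agrees with a product of elements of $[\mathcal R_1]$ and $[\mathcal R_2]$ on a set of measure $>1-\epsilon$.

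The main obstacle is precisely this per-step construction. The recipe ``push each current image back one geodesic step'' is a well-defined map, but it need not be injective, nor defined on a set meeting every class in balanced measure, so it is not literally a partial isomorphism that extends to the full group. The crux is to realize each reduction by an honest injective, measure-preserving element of a single $[\mathcal R_j]$ that still strictly decreases the word length on almost all of the relevant set: this requires the measurable geodesic selection, a partition of $\{w\le k\}$ into countably many Borel pieces on which the step lies in one relation, and a within-class matching argument (again a relative version of Proposition \ref{isopar}) to repair collisions, all while keeping the error accumulated over the $k$ stages below $\epsilon$. Everything else, namely the exhaustion by finite joins, the extension lemma, and the inductive reduction to two relations, is routine by comparison.
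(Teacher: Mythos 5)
First, a remark on the comparison itself: the paper offers no proof of Theorem \ref{ktdense}; it is imported verbatim from Kittrell and Tsankov, so your argument can only be judged on its own merits. Your outer layer is sound and is indeed the routine part: the sets $A_N$ increase to a conull set because every $\mathcal R$-chain uses only finitely many of the $\mathcal R_j$; the extension of $T|_{A_N}$ to an element of $[\mathcal S_N]$ works because $A_N$ and $T(A_N)$ have equal conditional measure over the $\mathcal S_N$-invariant $\sigma$-algebra; and the induction down to two relations is justified by the bi-invariance of $d_u$.

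The gap is in the two-relation case, exactly at the step you isolate and then defer to ``a within-class matching argument to repair collisions''. That repair is impossible in general: one round of moves, consisting of one factor from $[\mathcal R_1]$ and one from $[\mathcal R_2]$, need not lower the word length from $k$ to $k-1$ on all of $\{w\le k\}$, and the failure can occur on a set of definite positive measure. Concretely, take five disjoint sets $X_1,X_2,Y,W_1,W_2$ of equal measure, let $\mathcal R_1$ be generated by measure isomorphisms $X_1\to X_2\to Y$ (trivial on $W_1\cup W_2$), let $\mathcal R_2$ be generated by $Y\to W_1\to W_2$ (trivial on $X_1\cup X_2$), and let $T$ send each point of $X_1$ to the corresponding point of $W_1$ and each point of $X_2$ to the corresponding point of $W_2$ (extended to an involution, say). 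Each $\mathcal R$-class is $\{x_1,x_2,y,w_1,w_2\}$ with $Tx_1=w_1$, $Tx_2=w_2$, and $w\equiv 2$ on $X_1\cup X_2$. The only point at distance $1$ from $x_1$ lying in the $\mathcal R_2$-class of $w_1$ is $y$, and the same point $y$ is the only candidate for $x_2$ and $w_2$: there is no alternative penultimate vertex to match with. Hence any $\psi\in[[\mathcal R_2]]$ that makes progress must send both $w_1$ and $w_2$ into $Y$, which injectivity forbids; one checks that no product of one element of $[\mathcal R_2]$ and one of $[\mathcal R_1]$, in either order, brings both $X_1$ and $X_2$ to distance $\le 1$ from their targets, and the obstruction lives on a set of measure $2/5$, far too large to discard into your $\epsilon$.

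Four alternating factors do suffice in this example (first shunt $X_2$ into $X_1$'s position inside the $\mathcal R_1$-classes, then feed the two families through the bottleneck $Y$ one after the other), which shows both that the theorem survives and that the number of factors needed to gain one unit of word length is not controlled by the geodesic data. So the invariant driving your induction --- word length drops by one per bounded round, hence vanishes after $k$ rounds --- is simply unavailable, and this bottleneck phenomenon is precisely the real content of the Kittrell--Tsankov argument, which has to factor $T$ over a countable decomposition into pieces on which injective selections of intermediate points exist and recombine them by an exhaustion argument, rather than by a global word-length descent.
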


An easy application is the following proposition. 

\begin{prop}[{\cite[Prop. 10]{gentopergo}}]\label{isopgen}
If $\Phi=\{\varphi_1,...,\varphi_{p-1}\}$ is a pre-$p$-cycle, then for all $i\in\{1,...,p-1\}$, the full group of $\mathcal R_\Phi$ is topologically generated by $[\mathcal R_{\{\varphi_i\}}]\cup\{C_\Phi\}$.
\end{prop}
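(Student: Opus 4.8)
The plan is to realize $[\mathcal R_\Phi]$ as the closed group generated by the $p-1$ full groups $[\mathcal R_{\{\varphi_1\}}],\ldots,[\mathcal R_{\{\varphi_{p-1}\}}]$ through the Kittrell--Tsankov density theorem (Theorem \ref{ktdense}), and to produce all of these from the single full group $[\mathcal R_{\{\varphi_i\}}]$ by conjugating with powers of $C_\Phi$. To fix notation, write $A_1=\dom\varphi_1$ and $A_{j+1}=\rng\varphi_j$ for $j\in\{1,\ldots,p-1\}$; the defining conditions of a pre-$p$-cycle say exactly that $A_1,\ldots,A_p$ are disjoint, that $\varphi_j\colon A_j\to A_{j+1}$, and that $C_\Phi$ restricts to $\varphi_j$ on $A_j$ for $j<p$ while cyclically sending $A_p$ back to $A_1$. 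Let $G$ be the closed subgroup of $\Aut(X,\mu)$ generated by $[\mathcal R_{\{\varphi_i\}}]\cup\{C_\Phi\}$. Since $\mathcal R_{\{\varphi_i\}}\subseteq\mathcal R_\Phi$ and $C_\Phi\in[\mathcal R_\Phi]$, we have $G\subseteq[\mathcal R_\Phi]$; the whole content is the reverse inclusion.

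The key computation is the conjugation identity $C_\Phi\,\mathcal R_{\{\varphi_j\}}\,C_\Phi\inv=\mathcal R_{\{\varphi_{j+1}\}}$ for $j\in\{1,\ldots,p-2\}$. This is checked directly on classes: a nontrivial $\mathcal R_{\{\varphi_j\}}$-class is a pair $\{x,\varphi_j(x)\}$ with $x\in A_j$, and applying $C_\Phi$ sends it to $\{\varphi_j(x),\varphi_{j+1}\varphi_j(x)\}=\{y,\varphi_{j+1}(y)\}$ where $y=\varphi_j(x)$ runs over $A_{j+1}=\dom\varphi_{j+1}$, which is precisely a nontrivial $\mathcal R_{\{\varphi_{j+1}\}}$-class. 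Using the standard fact $T[\mathcal R]T\inv=[T\mathcal R T\inv]$, this upgrades to $C_\Phi[\mathcal R_{\{\varphi_j\}}]C_\Phi\inv=[\mathcal R_{\{\varphi_{j+1}\}}]$, and rearranging gives $C_\Phi\inv[\mathcal R_{\{\varphi_j\}}]C_\Phi=[\mathcal R_{\{\varphi_{j-1}\}}]$ for $j\geq 2$. Starting from the given index $i$ and conjugating by positive and negative powers of $C_\Phi$, all of $[\mathcal R_{\{\varphi_1\}}],\ldots,[\mathcal R_{\{\varphi_{p-1}\}}]$ therefore lie in $G$.

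Finally, the join of $\mathcal R_{\{\varphi_1\}},\ldots,\mathcal R_{\{\varphi_{p-1}\}}$ is $\mathcal R_\Phi$ itself: chaining the $\varphi_j$ shows the classes of the join are exactly the $C_\Phi$-orbits on $\bigcup_j A_j$ together with the singletons elsewhere, i.e.\ the classes of $\mathcal R_\Phi=\mathcal R_{C_\Phi}$. Theorem \ref{ktdense} then says that $\la\bigcup_j[\mathcal R_{\{\varphi_j\}}]\ra$ is dense in $[\mathcal R_\Phi]$; since this subgroup is contained in the \emph{closed} group $G$, we conclude $[\mathcal R_\Phi]\subseteq G$, hence equality.

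I expect the only delicate point to be the bookkeeping in the conjugation identity: keeping track of which of the disjoint pieces $A_j$ a point lands in after applying $C_\Phi$, and observing that one never needs the ``wrap-around'' pairing of $A_p$ with $A_1$ — which is exactly what conjugating past the endpoints $j=1$ or $j=p-1$ would produce — because the $p-1$ relations $\mathcal R_{\{\varphi_j\}}$ already join up to all of $\mathcal R_\Phi$.
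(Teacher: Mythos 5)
Your proof is correct, and it follows exactly the route the paper intends: the paper states this proposition without proof as ``an easy application'' of Theorem \ref{ktdense} (citing the original source), and the standard argument there is precisely your conjugation identity $C_\Phi[\mathcal R_{\{\varphi_j\}}]C_\Phi\inv=[\mathcal R_{\{\varphi_{j+1}\}}]$ followed by the Kittrell--Tsankov density theorem applied to the join of the $\mathcal R_{\{\varphi_j\}}$. No gaps.
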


Let us finally turn to the relationship between dense subgroups of full groups and total non freeness that we mentioned in the introduction.

\begin{prop}\label{prop:dense implies tnf}
Let $\mathcal R$ be a measure-preserving aperiodic equivalence relation. If $\Gamma\leq[\mathcal R]$ is a dense countable subgroup, then the $\Gamma$-action is totally non free.
\end{prop}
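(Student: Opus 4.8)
The plan is to establish total non-freeness by showing that the stabiliser map $\Stab_\Gamma$ is injective on a conull set, since equivariance is automatic and the only thing left to verify (as the authors point out after the definition) is injectivity up to measure zero. Concretely, I want to produce a conull Borel set $X_0\subseteq X$ such that for all distinct $x,y\in X_0$, there exists some $\gamma\in\Gamma$ which fixes exactly one of $x,y$, so that $\Stab_\Gamma(x)\neq\Stab_\Gamma(y)$.

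The key tool will be density of $\Gamma$ in $[\mathcal R]$ together with aperiodicity. First I would fix a countable dense family of elements and reduce separating two points to separating them by a \emph{full group} element: given distinct $x,y$ in the same $\mathcal R$-class, aperiodicity lets me find $T\in[\mathcal R]$ whose support distinguishes them, e.g. an involution supported on a small set containing $x$ but not $y$ (using Proposition \ref{isopar} to realise the required partial isomorphisms inside the class structure). Points in distinct $\mathcal R$-classes are trivially separated since a class-preserving element fixing $x$ with $y\notin[x]_\mathcal R$ witnesses $\Stab(x)\neq\Stab(y)$ once we also exhibit an element moving $y$ but fixing $x$. The heart of the argument is to pass from such a distinguishing $T\in[\mathcal R]$ to a genuinely distinguishing $\gamma\in\Gamma$: since $\Gamma$ is dense for the uniform metric $d_u$, I can approximate $T$ by some $\gamma\in\Gamma$ with $d_u(\gamma,T)$ arbitrarily small, and then control the measure of the set where the stabiliser behaviour of $\gamma$ could disagree with that of $T$.

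The main obstacle, and where care is needed, is that density in $d_u$ controls behaviour only up to small measure, whereas injectivity of $\Stab$ is a pointwise statement about \emph{individual} pairs $(x,y)$. So a single approximation argument cannot separate a fixed pair; instead I expect to run a measure-theoretic/Borel-selection argument. The natural route is to consider, for a countable dense $\{\gamma_n\}\subseteq\Gamma$, the Borel equivalence relation $x\sim y \iff \Stab_\Gamma(x)=\Stab_\Gamma(y)$, and to show its classes are $\mu$-null (off a conull set they are singletons). Using that the $\gamma_n$ are dense, one argues that the set of pairs $(x,y)$ with $y\in[x]_\mathcal R$, $x\neq y$, yet $\Stab_\Gamma(x)=\Stab_\Gamma(y)$, must be null: if it had positive measure one could, by density, find a full-group element separating a positive-measure set of such pairs and approximate it well enough by some $\gamma_n$ to contradict $\Stab_\Gamma(x)=\Stab_\Gamma(y)$ on a positive-measure subset.

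I would then assemble these pieces: discard the null set of ``bad'' pairs, intersect over the countably many approximating elements, and invoke that a Borel equivalence relation whose every class is null (on a conull set) forces $\Stab_\Gamma$ to be injective there, hence a conjugacy onto its image $(\Sub(\mathbb F_m),\Stab_*\mu)$. This yields total non-freeness. I expect the technical crux to be the quantitative step translating ``$\Gamma$ dense in $d_u$'' into ``for almost every pair, some $\gamma\in\Gamma$ separates them,'' and aperiodicity is exactly what guarantees each infinite class contains enough room to build the separating full-group elements in the first place.
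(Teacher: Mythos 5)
Your overall strategy---proving injectivity of $\Stab$ on a conull set by separating pairs of points---has a genuine logical gap at the step where you pass from ``the set of unseparated pairs is null'' (equivalently, ``the classes of $x\sim y\iff\Stab_\Gamma(x)=\Stab_\Gamma(y)$ are null'') to ``off a conull set the classes are singletons''. That implication is false: the two-to-one map $[0,1]\times\{0,1\}\to[0,1]$, $(t,i)\mapsto t$, has every fibre of cardinality $2$ (hence null, and almost every pair for the product measure is separated), yet it is injective on no conull set. So even if the density-plus-approximation argument successfully shows that almost every pair $(x,y)$ is separated by some $\gamma_n$, total non-freeness does not follow. Note also that the cross-class pairs, which you dismiss as ``trivially separated,'' carry full product measure, so this is precisely where the failure mode lives; for pairs inside a single $\mathcal R$-class a fibrewise counting-measure argument could be salvaged, but that only yields injectivity of $\Stab$ restricted to each orbit, which is strictly weaker than what is required.

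The paper's proof sidesteps this entirely by working in the measure algebra: $\Stab$ is a conjugacy if and only if the injective map $\Stab\inv:\MAlg(\Sub(\Gamma),\Stab_*\mu)\to\MAlg(X,\mu)$ is surjective, and since its image is closed it suffices that the image be dense. Because $\Stab\inv(\{\Lambda\in\Sub(\Gamma):\gamma\notin\Lambda\})=\supp(\gamma)$, one only needs the family $(\supp(\gamma))_{\gamma\in\Gamma}$ to be dense in $\MAlg(X,\mu)$; this is where density of $\Gamma$ and aperiodicity enter, via the fact that every $A\in\MAlg(X,\mu)$ equals $\supp(T)$ for some $T\in[\mathcal R]$ together with the inequality $\mu(\supp\gamma\bigtriangleup\supp T)\leq d_u(\gamma,T)$. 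Your ingredients (density in $d_u$, aperiodicity used to prescribe supports) are the right ones, but they must be fed into this measure-algebra/point-realization argument rather than into a pairwise separation argument, which cannot by itself deliver injectivity mod null.
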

\begin{proof}
We will work in the setting of measure algebras\footnote{See \cite[Chap. 2]{MR1958753}
 for some background on measure algebras.}: to see that $\Stab: (X,\mu)\to(\Sub(\Gamma),\Stab_*\mu)$ is a bijection between full measure sets, it suffices to show that the injective map $\Stab\inv: \MAlg(\Sub(\Gamma),\Stab_*\mu)\to \MAlg(X,\mu)$ is also surjective. Moreover since its image is closed, it suffices to show that its image is dense.

For all $\gamma\in\Gamma$, let $A_\gamma:=\{\Lambda\in\Sub(\Gamma): \gamma\not\in \Lambda\}$. Then  for all $\gamma\in\Gamma$, we have $\Stab\inv(A_\gamma)=\supp(\gamma)$. So it suffices to show that the family $(\supp(\gamma))_{\gamma\in\Gamma}$ is dense in $\MAlg(X,\mu)$. But this follows from the density of $\Gamma$ in $[\mathcal R]$ and the well-known fact that for all $A\in\MAlg(X,\mu)$, there exists $T\in[\mathcal R]$ such that $\supp(T)=A$.
\end{proof}

\section{Highly faithful actions}\label{sec: highlyfaith}

Let us now study in details the notion of a highly faithful action. 

\begin{df}An action of a countable group $\Gamma$ on a countable set $Y$ is called $n$-faithful if for any $\gamma_1,...,\gamma_n\in\Gamma\setminus\{1\}$, there exists $y\in Y$ such that $\gamma_i y\neq y$ for all $i=1,...,n$. The action is \textbf{highly faithful} if it is $n$-faithful for every $n\in\N$; in other words if for any finite subset $F\subseteq \Gamma\setminus\{1\}$, there exists $y\in Y$ such that $fy\neq y$ for all $f\in F$.
\end{df}

Note that every free action is highly faithful, and that an action is faithful iff it is $1$-faithful. A simple example of a faithful action of an infinite group which is not highly faithful is given by the group  $\mathfrak S_{(\infty)}$ of finitely supported permutations of $\N$ acting on $\N$. Note that this action is however highly transitive. I don't know if $\mathfrak S_{(\infty)}$ can have a highly transitive highly faithful action.

\begin{lem}\label{lem:highly faithful is locally free}Let $\Gamma$ be a countable group acting on a set $Y$. Then the action is highly faithful iff for all $n\in\N$ and all pairwise distinct $\gamma_1,...,\gamma_n\in\Gamma$, there exists $y\in Y$ such that for all distinct $i,j\in\{1,...,n\}$, 
$$\gamma_iy\neq\gamma_jy.$$
\end{lem}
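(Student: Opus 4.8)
The plan is to prove the equivalence by showing each implication, noting that the condition in the lemma is simply high faithfulness rephrased in terms of the pairwise differences $\gamma_i^{-1}\gamma_j$. The key observation is that $\gamma_i y \neq \gamma_j y$ is equivalent to $(\gamma_j\inv\gamma_i)y\neq y$, so a statement about pairwise distinct group elements acting differently on $y$ translates into a statement about a finite collection of nontrivial elements fixing no common point.

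For the forward direction, I would assume the action is highly faithful and take pairwise distinct $\gamma_1,\dots,\gamma_n\in\Gamma$. The natural move is to consider the finite set $F=\{\gamma_j\inv\gamma_i: i\neq j\}\subseteq\Gamma\setminus\{1\}$, which is nonempty and consists of nontrivial elements precisely because the $\gamma_i$ are pairwise distinct. High faithfulness then directly yields a point $y\in Y$ with $fy\neq y$ for all $f\in F$, and unwinding this gives $\gamma_i y\neq\gamma_j y$ for all distinct $i,j$, as desired.

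For the converse, I would assume the pairwise-distinctness condition and be given a finite set $F=\{f_1,\dots,f_n\}\subseteq\Gamma\setminus\{1\}$; the goal is a point $y$ moved by every $f_k$. Here the idea is to augment $F$ so that it looks like a set of pairwise differences: I would consider the $2n$ elements obtained by throwing in the identity together with the $f_k$'s, namely the list $1, f_1,\dots, f_n$. Applying the hypothesis to a suitable pairwise-distinct sublist of these elements (after discarding repetitions among the $f_k$, which is harmless) produces a $y$ such that distinct elements of the list act differently on $y$; in particular $1\cdot y=y$ differs from $f_k\cdot y$ for each $k$, giving $f_k y\neq y$.

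The main subtlety—more bookkeeping than genuine obstacle—is the reduction to pairwise distinct elements in the converse: the $f_k$ in a given finite subset of $\Gamma\setminus\{1\}$ need not be distinct, and none equals $1$, so I must check that $1,f_1,\dots,f_n$ contains at least two distinct entries and extract a genuinely pairwise-distinct list to feed into the hypothesis. Since every $f_k\neq 1$, the pair $\{1,f_k\}$ already has distinct entries, so one can simply work with the distinct values among $\{1\}\cup\{f_1,\dots,f_n\}$ and recover the desired conclusion for each $f_k$ by comparing it to the entry $1$. No calculation beyond this indexing is required.
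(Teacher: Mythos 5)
Your proof is correct and takes essentially the same approach as the paper: the forward direction is exactly the paper's one-line argument of applying the definition of high faithfulness to the set of pairwise quotients $\{\gamma_j^{-1}\gamma_i : i\neq j\}$ (the paper writes $\gamma_i\gamma_j^{-1}$, but your ordering is the one that unwinds correctly at a single point $y$). Your converse, obtained by adjoining the identity to the finite set and comparing each $f_k$ with $1$, is the routine step the paper leaves implicit.
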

\begin{proof}
Apply the definition of high faithfulness to the finite set $F:=\{\gamma_i\gamma_j\inv: i\neq j\in \{1,...,n\}\}$.
\end{proof}

The previous lemma has the following nice geometric interpretation when $\Gamma$ is a finitely generated group: a transitive action is highly faithful if and only if the associated Schreier graph contains arbitrarily large balls of the Cayley graph of $(\Gamma,S)$ for some (or any) finite generating set $S$. 

In this article our focus will be on the measured version of high faithfulness. 

\begin{df}
A measure-preserving action of a countable group $\Gamma$ on a probability space $(X,\mu)$ is called \textbf{highly faithful} if for almost every $x\in X$,  the $\Gamma$-action on $\Gamma\cdot x$ is highly faithful. \qedhere
\end{df}

We will now give a useful characterization of highly faithful actions. The proof uses the following well-known lemma.

\begin{lem}[{see e.g. \cite[Lem. 5.1]{Eisenmann:2014oq}}] \label{lem: disjoint image}
Let $T\in \Aut(X,\mu)$, let $A\subseteq X$ such that $\mu(\{x\in A: T(x)\neq x\})>0$. Then there exists a positive measure set $A'\subseteq A$ such that $A'$ and $T(A')$ are disjoint.
\end{lem}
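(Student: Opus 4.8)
The plan is to reduce to the subset where $T$ genuinely moves points and then exploit the standard Borel structure of $(X,\mu)$ to separate each point from its image by a fixed Borel set. Write $B:=\{x\in A: T(x)\neq x\}=A\cap\supp T$, so that by hypothesis $\mu(B)>0$; I will produce the desired set $A'$ as a positive measure subset of $B$. The guiding idea is that on $B$ we have $x\neq T(x)$ pointwise, and distinct points in a standard Borel space can always be told apart by a countable family of Borel sets. Concretely, fix a countable family $(C_n)_{n\in\N}$ of Borel subsets of $X$ that separates points and is closed under complementation, i.e. such that for all $x\neq y$ in $X$ there is some $n$ with $x\in C_n$ and $y\notin C_n$; such a family exists because $X$ is standard (for instance the balls of rational radius around a countable dense set, together with their complements, after identifying $(X,\mu)$ with a Polish space).

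Next I would slice $B$ accordingly. For each $n\in\N$ set
$$B_n:=\{x\in B: x\in C_n\text{ and }T(x)\notin C_n\}.$$
Each $B_n$ is Borel since $T$ is a Borel bijection, hence $T\inv(C_n)$ is Borel. The separation property applied to the pair $x\neq T(x)$ (using that the family is closed under complementation) guarantees that every $x\in B$ lies in some $B_n$, so $B=\bigcup_{n\in\N}B_n$. As $\mu(B)>0$, countable subadditivity forces $\mu(B_n)>0$ for at least one index $n$; I fix such an $n$ and take $A':=B_n$.

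Finally I would check the disjointness. By construction $A'\subseteq C_n$, while for every $x\in A'$ we have $T(x)\notin C_n$, so $T(A')\subseteq X\setminus C_n$. Therefore $A'$ and $T(A')$ are disjoint, and $\mu(A')>0$ with $A'\subseteq A$, which is exactly what is required. There is no real obstacle here: the only point requiring care is the invocation of a countable separating family, which is precisely where the standardness of the space is used; everything else is bookkeeping. (An equivalent route, should one prefer a metric formulation, is to fix a compatible Polish metric $d$, restrict to $\{x\in B: d(x,T(x))>1/k\}$ for a suitable $k$ making this set of positive measure, and then intersect with a Borel piece of diameter less than $1/k$ from a countable Borel partition of $X$; the same distance estimate then yields $A'\cap T(A')=\emptyset$.)
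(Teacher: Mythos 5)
Your proof is correct: the reduction to $B=A\cap\supp T$, the countable point-separating family of Borel sets, and the pigeonhole over the slices $B_n$ give exactly the standard argument for this folklore lemma. The paper itself offers no proof and simply cites Eisenmann--Glasner, and your argument is essentially the one found there, so there is nothing to add.
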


\begin{thm}\label{thm:cara highly faithful pmp}Let $\Gamma$ be a countable group and fix a measure-preserving ergodic $\Gamma$-action on $(X,\mu)$. Then the following are equivalent:
\begin{enumerate}[(1)]
\item the $\Gamma$-action is highly-faithful;
\item for all finite $F\subseteq \Gamma\setminus\{1\}$, the set $\{x\in X: \forall f\in F, fx\neq x\}$ has positive measure;
\item for all finite $F\subseteq \Gamma$, there exists a positive measure set $A\subseteq X$ such that $(fA)_{f\in F}$ is disjoint;
\item there exists an increasing exhaustive family $(F_n)$ of finite subsets of $\Gamma$ and a sequence of positive measure subsets $(A_n)$ of $X$ such that $(fA_n)_{f\in F_n, n\in\N}$ is disjoint;
\item\label{cond:nice disjoint sets for all Fn} whenever $(F_n)$ is an increasing exhaustive family of finite sets $(F_n)$ of $\Gamma$, there exists a sequence of positive measure subsets $(A_n)$ of $X$ such that $(fA_n)_{f\in F_n, n\in\N}$ is disjoint.
\end{enumerate}
\end{thm}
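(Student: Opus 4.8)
The plan is to prove the equivalence by establishing a cycle of implications, exploiting the ergodicity hypothesis to move from ``positive measure somewhere'' statements to ``positive measure sets witnessing disjointness.'' First I would handle the equivalence of (1) and (2). The implication $(1)\Rightarrow(2)$ is essentially immediate from the orbit-wise definition: if almost every orbit is highly faithful, then for a fixed finite $F\subseteq\Gamma\setminus\{1\}$ and almost every $x$ there is some point $y$ in the orbit $\Gamma\cdot x$ with $fy\neq y$ for all $f\in F$; writing $y=\gamma x$ and pushing the condition back through the action should let me conclude that the set $\{x:\forall f\in F, fx\neq x\}$ meets almost every orbit, hence has positive measure by ergodicity. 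For $(2)\Rightarrow(1)$ I would run the argument in reverse: for each finite $F$ the set in (2) has positive measure, so by ergodicity it meets almost every orbit, and intersecting over a countable exhaustion of $\Gamma\setminus\{1\}$ gives that almost every orbit is highly faithful.

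Next I would prove $(2)\Leftrightarrow(3)$. For $(2)\Rightarrow(3)$, given a finite $F\subseteq\Gamma$, I would apply (2) to the set $F':=\{fg^{-1}: f\neq g\in F\}\subseteq\Gamma\setminus\{1\}$ (the same trick used in Lemma \ref{lem:highly faithful is locally free}) to get a positive measure set $B$ on which $hx\neq x$ for all $h\in F'$. This tells me that for each such pair the automorphism $fg^{-1}$ moves every point of $B$, but I need a genuinely disjoint family $(fA)_{f\in F}$ rather than merely pairwise-distinct images. Here I would invoke Lemma \ref{lem: disjoint image} repeatedly: starting from $B$, for each of the finitely many pairs $f\neq g$ I shrink to a positive measure subset on which $fg^{-1}$ has disjoint image from the identity, i.e.\ $gA$ and $fA$ are disjoint. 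Iterating over all pairs (finitely many) and passing to successive positive measure subsets yields a single positive measure $A\subseteq B$ with $(fA)_{f\in F}$ disjoint. The converse $(3)\Rightarrow(2)$ is the easier direction: disjointness of $(fA)_{f\in F'}$ for an appropriate $F'$ forces $fx\neq x$ on $A$.

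Finally I would close the loop through the ``for all exhaustions'' statements. The implications $(5)\Rightarrow(4)$ and $(4)\Rightarrow(3)$ are trivial (specialize to one exhaustion, respectively read off a single $F_n$). The substantive step is $(3)\Rightarrow(5)$: given an arbitrary increasing exhaustive family $(F_n)$, I must produce positive measure sets $A_n$ so that the \emph{combined} family $(fA_n)_{f\in F_n,\,n\in\N}$ is disjoint \emph{across all} $n$ simultaneously, not just within each level. I expect this to be the main obstacle, since (3) only guarantees disjointness for a single finite set at a time. The idea is to build the $A_n$ inductively inside a shrinking reservoir: having chosen $A_1,\dots,A_{n-1}$, the union $U_{n-1}:=\bigcup_{k<n}\bigcup_{f\in F_k}fA_k$ has measure strictly less than $1$, so its complement has positive measure and (after using ergodicity and Proposition \ref{isopar}, or directly restricting (3) to the complement) I can find a positive measure $A_n$ with $(fA_n)_{f\in F_n}$ disjoint and contained in $X\setminus U_{n-1}$; to make room I would take each $A_n$ of sufficiently small measure, e.g.\ $\mu(A_n)<2^{-n}/\abs{F_n}$, so that the total measure consumed stays bounded below $1$ and the construction never exhausts the space. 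Care is needed to ensure the new blocks $fA_n$ avoid the previously placed blocks, which is exactly what placing $A_n$ in the complement $X\setminus U_{n-1}$ secures.
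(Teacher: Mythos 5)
Your treatment of $(1)\Leftrightarrow(2)$ via ergodicity, of $(2)\Rightarrow(3)$ via the set $F'=\{fg^{-1}: f\neq g\in F\}$ and repeated use of Lemma \ref{lem: disjoint image}, and of the trivial implications all agree with the paper. The genuine gap is in your main step $(3)\Rightarrow(5)$. You run a forward induction: having fixed $A_1,\dots,A_{n-1}$, you place $A_n$ inside the complement of $U_{n-1}=\bigcup_{k<n}\bigcup_{f\in F_k}fA_k$ and assert that this ``secures'' that the new blocks avoid the old ones. It does not: $A_n\subseteq X\setminus U_{n-1}$ says nothing about where the translates $fA_n$ for $f\in F_n\setminus\{1\}$ land. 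What you actually need is $A_n\subseteq X\setminus\bigcup_{f\in F_n}f^{-1}U_{n-1}$, and here the induction can die: condition (3) hands you a witness $A$ for $F_n$ of \emph{some} positive measure with no lower bound, and nothing prevents $A$ from being entirely contained in $\bigcup_{f\in F_n}f^{-1}U_{n-1}$. Your smallness condition $\mu(A_k)<2^{-k}/\abs{F_k}$ controls the total measure consumed, but it cannot make $\abs{F_n}\mu(U_{n-1})$ small compared to $\mu(A)$, since $\mu(A)$ and $\abs{F_n}$ are unknown at the stages $k<n$ when the $A_k$ were fixed. So the step may fail to produce a non-null $A_n$. (Ergodicity and Proposition \ref{isopar} do not rescue this either: moving $A$ by an element of the pseudo full group destroys the disjointness of the $\Gamma$-translates, because the $\Gamma$-action is fixed.)

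The paper resolves exactly this difficulty by shrinking in the opposite direction. It first chooses, for every $n$, a witness $B_n$ for $F_n$ from (3), then passes to subsets so that the \emph{later} sets are small relative to the \emph{earlier} ones, namely $\abs{F_n}\abs{F_{n+1}}\mu(B_{n+1})<\tfrac14\mu(B_n)$, whence $\abs{F_n}\abs{F_{n+m}}\mu(B_{n+m})<4^{-m}\mu(B_n)$. It then sets $A_n:=B_n\setminus\bigcup_{m\geq1}F_n^{-1}F_{n+m}B_{n+m}$; the removed part has measure less than $\sum_{m\geq1}4^{-m}\mu(B_n)<\mu(B_n)$, so $A_n$ is non-null, and by construction $f_1A_n\cap f_2A_{n+m}=\emptyset$ for $f_1\in F_n$, $f_2\in F_{n+m}$. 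The point your version misses is that whatever is removed from a given witness must be controlled by the measure of that witness itself, which is only possible when one removes pullbacks of \emph{future} (hence still adjustable) sets rather than of already-fixed past ones. Finally, the paper only proves $(3)\Rightarrow(4)$ and gets $(5)$ from $(4)$ by re-indexing exhaustions, so a single exhaustion suffices.
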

\begin{proof}
The chain of implications $(5)\impl(4)\impl(3)\impl(2)$ is straightforward. Note that by ergodicity given a countable family of Borel sets, all its members are of positive measure if and only if almost every $\Gamma$-orbit intersects each of its members. In particular condition (2) is satisfied if and only if the $\Gamma$-action onto almost every orbit is highly faithful, so the equivalence $(1)\Leftrightarrow(2)$ holds. 
 Also (5) follows from (4) since given any two exhaustive increasing sequences $(F_n)$, $(F'_n)$ of subsets of $\Gamma$, there exists an increasing map $\varphi:\N\to\N$ such that for all $n\in\N$ we have  $F'_n\subseteq F_{\varphi(n)}$. 

Let us show that (2) implies (3). Let $F$ be a finite subset of $\Gamma$, consider the set $F':=\{f_2\inv f_1: f_1\in F, f_2\in F, f_1\neq f_2\}$. By (2) and an inductive application of Lemma 
\ref{lem: disjoint image}, we find $A\subseteq X$ of positive measure such that for all $f\in F'$, $fA\cap A=\emptyset$. But then for all $f_1\neq f_2\in F$, we have $f_2\inv f_1A\cap A=\emptyset$, so $f_1A\cap f_2A=\emptyset$, which establishes (3).

We now only have to prove that (3) implies (4), so let us assume (3). We fix an increasing exhausting sequence $(F_n)$ of finite subsets of $\Gamma$ such that $1\in F_0$. Using (3) repeatedly, we obtain a sequence $(B_n)$ of positive measure subsets of $X$ such that for all $n\in\N$, the family $(fB_n)_{f\in F_n}$ is disjoint. By inductively taking smaller subsets, we may assume that for all $n\in\N$, 
$$\abs{F_n}\abs{F_{n+1}}\mu(B_{n+1})<\frac 14\mu(B_n).$$
This implies that for all $n\geq 0$ and $m\geq 1$, we have the inequality 
\begin{equation*}
\abs{F_n}\abs{F_{n+m}}\mu(B_{n+m})<\frac 1{4^m}\mu(B_n).\end{equation*}
For all $n\in\N$, let $A_n:=B_n\setminus\bigcup_{m\geq 1} F_{n}\inv F_{n+m}B_{n+m}$. Since $\sum_{m\geq 1} \frac 1{4^m}<1$, the previous inequality  implies that each $A_n$ has positive measure. 

Let $n\geq 0$, $m\geq 1$, $f_1\in F_n$ and $f_2\in F_{n+m}$.
By construction, the set $A_n$ is disjoint from $f_1\inv f_2 B_{n+m}$. Since $B_{n+m}$ contains $A_{n+m}$, we deduce that $A_n$ is disjoint from $f_1\inv f_2 A_{n+m}$ so that $f_1A_n$ is disjoint from $f_2 A_{n+m}$. Since $A_n$ is a subset of $B_n$ whose $F_n$-translates are disjoint, this means that the sequence $(fA_n)_{f\in F_n, n\in\N}$ is made of pairwise disjoint sets as required.
\end{proof}
\begin{rmq}The non-ergodic version of the previous theorem is obtained by asking in (2), (3), (4) and (5) that the sets which are considered intersect almost every orbit. 
\end{rmq}

\section{Residually finite groups and high faithfulness}\label{sec: rf groups and hf}

Let us first recast one definition of residual finiteness in terms of sequences of actions on finite sets.

\begin{df}
Let $\Gamma$ be a countable group, and let $(X_n,\alpha_n,o_n)$ be a sequence of pointed $\Gamma$-actions on finite sets. The sequence is \textbf{asymptotically free} if for all $\gamma\in\Gamma\setminus\{1\}$, there exists $N\in\N$ such that for all $n\geq N$, one has $\gamma\cdot o_n\neq o_n$.
\end{df}

The following lemma is proven exactly as Lemma \ref{lem:highly faithful is locally free}.
\begin{lem}\label{lem:asfreemoves}
Let $\Gamma$ be a countable group, and let $(X_n,\alpha_n,o_n)$ be an asymptotically free sequence of pointed $\Gamma$-actions on finite sets. Then for all finite $F\subseteq \Gamma$, there exists $N\in\N$ such that for all $n\geq N$ and all distinct $\gamma,\gamma'\in F$, one has 
$\gamma o_n\neq \gamma' o_n$.
\end{lem}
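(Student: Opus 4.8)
The plan is to reduce the assertion about pairs of distinct elements of $F$ to the asymptotic freeness hypothesis, exactly as in the proof of Lemma \ref{lem:highly faithful is locally free}, by passing to the finite set of ``difference'' elements. The guiding observation is that the condition $\gamma\cdot o_n\neq\gamma'\cdot o_n$ is equivalent, after applying $(\gamma')\inv$, to the single element $(\gamma')\inv\gamma$ moving the basepoint $o_n$.

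First I would fix a finite $F\subseteq\Gamma$ and introduce the finite set $F':=\{(\gamma')\inv\gamma : \gamma,\gamma'\in F,\ \gamma\neq\gamma'\}$. The key point is that $1\notin F'$: since $\gamma$ and $\gamma'$ are required to be distinct, every element of $F'$ is nontrivial, so $F'\subseteq\Gamma\setminus\{1\}$ and I may legitimately invoke asymptotic freeness on each of its (finitely many) members. For each $\delta\in F'$ the hypothesis yields some $N_\delta\in\N$ with $\delta\cdot o_n\neq o_n$ for all $n\geq N_\delta$. As $F'$ is finite I then set $N:=\max_{\delta\in F'}N_\delta$, so that simultaneously $\delta\cdot o_n\neq o_n$ for every $\delta\in F'$ and every $n\geq N$.

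Finally I would translate this back to the desired statement: given distinct $\gamma,\gamma'\in F$ and $n\geq N$, the element $\delta:=(\gamma')\inv\gamma$ lies in $F'$, so $(\gamma')\inv\gamma\cdot o_n\neq o_n$, and applying $\gamma'$ to both sides gives $\gamma\cdot o_n\neq\gamma'\cdot o_n$, as required. There is essentially no obstacle here; the only point deserving care is recording that the distinctness of $\gamma$ and $\gamma'$ is exactly what guarantees $\delta\neq 1$, which is what allows the asymptotic freeness hypothesis—stated only for nontrivial group elements—to apply.
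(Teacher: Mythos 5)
Your proof is correct and matches the paper's approach: the paper simply notes that the lemma "is proven exactly as Lemma \ref{lem:highly faithful is locally free}", whose proof is precisely the reduction to the finite set of difference elements $(\gamma')\inv\gamma$ and an appeal to the hypothesis for each of them. Your write-up fills in the same argument, including the key observation that distinctness of $\gamma,\gamma'$ ensures the difference element is nontrivial.
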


\begin{df}
A countable group $\Gamma$ is \textbf{residually finite} if it admits an asymptotically free sequence of pointed actions on finite sets.
\end{df}


The following lemma is well-known and can be used to show that every residually finite group embeds into the full group of any ergodic measure-preserving equivalence relation (see. \cite[4.(E)]{MR2583950} for more on this). We include a proof for completeness.

\begin{lem}\label{lem: S(K) in [R]}Let $\mathcal R$ be a countable measure-preserving ergodic equivalence relation on $(X,\mu)$. Suppose that $K$ is a finite set acted upon by a countable group $\Lambda$, and let $(C_k)_{k\in K}$ be a family of disjoint subsets of $X$, all of the same measure. 

Then there is a homomorphism $\iota:\Lambda\to[\mathcal R_{\restriction\bigcup_{k\in K}C_k}]$ such that for all $x\in\bigcup_{k\in K}C_k$, the $\Lambda$-action on the $\Lambda$-orbit of $x$ is conjugate to the $\Lambda$-action on $K$, and moreover for all $\lambda\in\Lambda$ and all $k\in K$, one has $\iota(\lambda)(C_k)=C_{\lambda(k)}$.  
\end{lem}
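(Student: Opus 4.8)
The plan is to build $\iota$ by fixing a single ``model fiber'' and transporting it to all the others via the ergodicity of $\mathcal R$, then letting $\Lambda$ permute the fibers exactly as it permutes $K$. First I would fix a basepoint $k_0\in K$, set $C:=C_{k_0}$, and for each $k\in K$ invoke Proposition \ref{isopar} (legitimate since $\mathcal R$ is ergodic and all the $C_k$ share the same measure) to choose a partial isomorphism $\psi_k\in[[\mathcal R]]$ of domain $C$ and range $C_k$, with $\psi_{k_0}=\mathrm{id}_C$. Because the $C_k$ are pairwise disjoint and each $\psi_k$ maps $C$ bijectively onto $C_k$, every point of $Y:=\bigcup_{k\in K}C_k$ can be written uniquely as $\psi_k(z)$ for some $k\in K$ and $z\in C$; this is the coordinate description $Y\cong K\times C$ that drives everything.

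Next I would define, for $\lambda\in\Lambda$,
$$\iota(\lambda)\bigl(\psi_k(z)\bigr):=\psi_{\lambda(k)}(z),$$
so that $\Lambda$ acts on the ``$K$-coordinate'' and leaves the ``$C$-coordinate'' untouched. The routine verifications are then immediate from the uniqueness of the coordinates: $\iota(\lambda)$ is a well-defined bijection of $Y$, it is measure-preserving (each $\psi_k$ is, and the $C_k$ are disjoint of equal measure), $\iota$ is a homomorphism since $\iota(\lambda)\iota(\lambda')(\psi_k(z))=\psi_{\lambda(\lambda'(k))}(z)=\psi_{(\lambda\lambda')(k)}(z)$, and the fiber identity $\iota(\lambda)(C_k)=\psi_{\lambda(k)}(C)=C_{\lambda(k)}$ holds by construction.

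The only point genuinely requiring that $\iota(\lambda)$ land in $[\mathcal R_{\restriction Y}]$ rather than merely in $\Aut(Y)$ is the relation membership: for $x=\psi_k(z)$ both $\psi_k$ and $\psi_{\lambda(k)}$ lie in the pseudo full group $[[\mathcal R]]$, so $\psi_k(z)\,\mathcal R\,z$ and $\psi_{\lambda(k)}(z)\,\mathcal R\,z$, whence $\iota(\lambda)(x)=\psi_{\lambda(k)}(z)\,\mathcal R\,\psi_k(z)=x$ and thus $\iota(\lambda)\in[\mathcal R_{\restriction Y}]$. For the orbit statement, given $x=\psi_k(z)$ the map $k'\mapsto\psi_{k'}(z)$ is a $\Lambda$-equivariant bijection from the $\Lambda$-orbit of $k$ onto the $\iota(\Lambda)$-orbit of $x$, which is exactly the asserted conjugacy with the $\Lambda$-action on $K$.

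I do not expect a real obstacle here: the construction is forced once one sets up the fibered coordinates, and the ergodicity is used only through Proposition \ref{isopar} to produce the $\psi_k$. The sole things to be careful about are the bookkeeping of the two coordinates (well-definedness of $\iota(\lambda)$ rests on the disjointness of the $C_k$, and the homomorphism property on keeping the left-action convention $(\lambda\lambda')(k)=\lambda(\lambda'(k))$ consistent) and recording that membership in the restricted full group is automatic because the transporting maps $\psi_k$ were chosen inside $[[\mathcal R]]$.
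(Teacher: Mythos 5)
Your proof is correct and follows essentially the same route as the paper: both use ergodicity via Proposition \ref{isopar} to transport a fixed model fiber onto each $C_k$ by elements of the pseudo full group, and then let $\Lambda$ permute the fibers as it permutes $K$. The only cosmetic difference is that the paper packages the transport maps as powers of a single $n$-cycle $C_\Phi$ built from a pre-$n$-cycle, whereas you choose the maps $\psi_k$ independently, which is if anything slightly cleaner.
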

\begin{proof}
Let $n$ be the cardinality of the set $K$, then we can suppose that $K=\Z/n\Z$. Since $\mathcal R$ is ergodic and all the $C_k$'s have the same measure, by Proposition \ref{isopar} there is a pre-$n$-cycle $\Phi=\{\varphi_1,...,\varphi_{n-1}\}$ such that for all $i\in\{1,...,n-1\}$, we have $\varphi_i(C_{i-1})=C_i$.   Let $T_\Phi\in[\mathcal R]$ be the associated $n$-cycle. Given $\lambda\in\Lambda$, we then define $\iota(\lambda)$ by:
$$\iota(\lambda)(x)=T_\Phi^{\lambda\cdot k-k}(x) \text{ where }k\in K\text{ is such that }x\in C_k.$$
Note that $\iota(\lambda)$ is well defined because $T^n=\mathrm{id}_X$. It is then straightforward to check that $\iota$ is a homomorphism satisfying the required assumptions.
\end{proof}

\begin{thm}\label{thm:highly faithful rf}
Let $\Gamma$ be a countable group. Consider a measure-preserving highly faithful ergodic $\Gamma$-action on $(X,\mu)$ and let $\Lambda$ be a residually finite countable group. Let $(F_n)$ be an increasing exhaustive family of finite subsets $(F_n)$ of $\Gamma$ such that $1\in F_0$, let $(A_n)$ be a sequence of positive measure subsets $(A_n)$ of $X$ such that $(fA_n)_{f\in F_n, n\in\N}$ is disjoint\footnote{Such a sequence exists by Theorem \ref{thm:cara highly faithful pmp}.}. Fix an asymptotically free sequence $(X_m,\alpha_m,o_m)_{m\in\N}$ of $\Lambda$-actions on finite pointed sets.

Then there exists a measure-preserving  $\Lambda$-action on $(X,\mu)$ which preserves the $\Gamma$-orbits such that the following assertions are true.

\begin{enumerate}[(1)]
\item The induced $\Gamma*\Lambda$-action is highly faithful.
\item The $\Lambda$-action is supported on $\bigsqcup_{f\in F_n,n\in\N} fA_n$ and has only finite orbits.
\item For all $x\in X$, either $x$ is fixed by $\Lambda$ or there exists $n\in\N$ such that the $\Lambda$-action on the $\Lambda$-orbit of $x$  is conjugate to $\alpha_n$.
\item Any $\Lambda$-action which coincides with this action when restricted to $\bigsqcup_{f\in F_n,n\in\N} fA_n$ will induce a highly faithful $\Gamma*\Lambda$-action.
\end{enumerate}
\end{thm}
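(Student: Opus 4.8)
The plan is to build the $\Lambda$-action by \emph{hosting} conjugate copies of the finite actions $\alpha_m$ on the fibres over the sets $A_n$, using the translates $(fA_n)_{f\in F_n}$ as the available ``positions''. First I would reindex: since $(F_n)$ is increasing and exhaustive we have $\abs{F_n}\to\infty$, so one can greedily choose $n_1<n_2<\cdots$ with $\abs{X_m}\leq\abs{F_{n_m}}$. For each $m$, pick an injection $\sigma_m\colon X_m\to F_{n_m}$ with $\sigma_m(o_m)=1$, and (using Proposition \ref{isopar} to produce the transition maps carrying $A_{n_m}$ onto its $\Gamma$-translates, exactly as in Lemma \ref{lem: S(K) in [R]} but with $\Gamma$-equivariant transitions) declare
$$T_\lambda\bigl(\sigma_m(\xi)\,z\bigr):=\sigma_m(\lambda\xi)\,z\qquad(z\in A_{n_m},\ \xi\in X_m),$$
with $T_\lambda=\mathrm{id}$ off $\bigsqcup_m\bigsqcup_{\xi}\sigma_m(\xi)A_{n_m}$. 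The disjointness of $(fA_n)_{f\in F_n,n}$ guarantees that the supports attached to distinct $m$ are disjoint, so this defines an action and $T_{\lambda\lambda'}=T_\lambda T_{\lambda'}$ is immediate. Since $\sigma_m(\lambda\xi)z$ lies in the $\Gamma$-orbit of $\sigma_m(\xi)z$, the $\Lambda$-action preserves $\Gamma$-orbits; items (2) and (3) then hold by construction, every non-fixed point having its entire $\Lambda$-orbit inside a single fibre, conjugate to $\alpha_m$.

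The heart of the matter is item (1) (and item (4), which is the same statement for any action agreeing with ours on $\bigsqcup fA_n$, so the argument below will only ever use the hosted part). Since $\Gamma$ already acts ergodically, so does $\Gamma*\Lambda$, and by Theorem \ref{thm:cara highly faithful pmp} it suffices to prove condition (2) there: for every finite $F\subseteq(\Gamma*\Lambda)\setminus\{1\}$ the set of points moved by \emph{every} element of $F$ has positive measure. As there are only countably many such $F$, I would treat each one on its own dedicated fibre, choosing by bookkeeping a large index $m$ reserved for $F$.

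On that fibre I would exploit two independent ``local freeness'' phenomena. On the $\Gamma$-side, the hypothesis that $(fA_{n_m})_{f\in F_{n_m}}$ is disjoint means that for a.e. $z\in A_{n_m}$ the map $f\mapsto fz$ is injective on $F_{n_m}$, so $\Gamma$ behaves like a free orbit out to radius ``$F_{n_m}$''. On the $\Lambda$-side, Lemma \ref{lem:asfreemoves} shows that for $m$ large the map $\gamma\mapsto\gamma o_m$ is injective on any prescribed finite $E\subseteq\Lambda$, so $\alpha_m$ looks free around $o_m$ out to radius ``$E$''. Taking $m$ large enough that $F_{n_m}$ contains all $\Gamma$-letters occurring in $F$ together with enough products of them, and that $\alpha_m$ separates all $\Lambda$-letters occurring in $F$, I would place the relevant finitely many points of $X_m$ at $\Gamma$-positions of $F_{n_m}$ in \emph{general position}: each reduced word of $F$, applied to $z\in A_{n_m}$, traces a path alternating $\Gamma$-shifts of positions with $\alpha_m$-permutations of the hosted points, and there are only finitely many ``collision'' equalities to avoid, against which the exponentially many available choices of positions leave room. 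Concluding that each $w\in F$ sends a positive-measure set of such $z$ off itself yields condition (2).

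The main obstacle is precisely this placement step. The subtlety is that a $\Gamma$-shift generically carries a hosted position \emph{out} of the hosted set, where $\Lambda$ acts trivially; one must therefore choose the positions so that along every trajectory one lands back on a hosted point exactly when the next letter is a $\Lambda$-letter, and so that that letter genuinely moves the point reached. The dangerous words are those of the form $g\lambda g^{-1}$, which collapse to the identity whenever $\lambda$ happens to fix the point sitting at position $g^{-1}$. Routing the $\Lambda$-steps through points of $X_m$ that are moved by the relevant letters (available for large $m$ by asymptotic freeness) while keeping all visited positions distinct (available by $\Gamma$-local freeness) is what makes the trajectories non-closing; this is the genuinely combinatorial core of the proof, and it is here that high faithfulness of the $\Gamma$-action, through the disjoint sets $(fA_n)$, is really used.
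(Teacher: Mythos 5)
There is a genuine gap, and you have in fact located it yourself: the ``placement step'' that you defer to the end is the entire content of the theorem, and your proposed architecture does not support it. You host a \emph{single} copy of $\alpha_m$ per level, at positions $\sigma_m(X_m)\subseteq F_{n_m}$, and then need every reduced alternating word $w=g_k\lambda_k\cdots g_1\lambda_1 g_0$ of $F$ to move a positive-measure set of points. For a $\Lambda$-letter to act nontrivially along the trajectory of $z\in A_{n_m}$, the current $\Gamma$-position must lie back in $\sigma_m(X_m)$; so your single injection $\sigma_m$ would have to be compatible with left multiplication by all the $\Gamma$-letters of $F$ at every intermediate step, which a single finite position set cannot arrange in general. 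The alternative of letting intermediate $\Lambda$-letters act trivially is also uncontrolled: once a position $g\sigma_m(\xi)$ leaves $F_{n_m}$, the set $g\sigma_m(\xi)A_{n_m}$ may meet the hosted sets of other levels $m'$ in an arbitrary way (the hypothesis only makes the translates $fA_n$ with $f\in F_n$ disjoint), so you cannot conclude that, say, the commutator $\lambda g\lambda\inv g\inv$ moves a positive-measure set. ``Exponentially many available choices of positions'' is not an argument here, because the constraints couple the $\Gamma$-geometry of $F_{n_m}$ with the $\Lambda$-combinatorics of $\alpha_m$ and there is no counting reason they are simultaneously satisfiable.

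The paper's proof resolves exactly this by abandoning the single-copy picture. It fixes the exhaustive family $H_n=\bigcup_{k=0}^n(I_{k,n}\cup J_{k,n})$ of \emph{alternating words} $I_{k,n}=(G'_nF'_n)^kG_n$, $J_{k,n}=F'_nI_{k,n}$, and builds the $\Lambda$-action by induction on the number of alternations: each time a word $h\in J_{l,n}$ produces a set $h(A'_n)$ sitting in some translate $fA_n$ with $f\in F'_n$, a \emph{fresh} copy of $\alpha_m$ is planted (via Lemma \ref{lem: S(K) in [R]}) whose base point set is $h(A'_n)$ itself and whose remaining points are new disjoint subsets $C_{h,i}$ of $A_n$, reserved in advance by the bound $\epsilon_n<\mu(A_n)/(k_m\abs{H_n})$. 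This is what guarantees that after every $\Gamma$-step the trajectory re-enters the $\Lambda$-support, that the next $\Lambda$-letter genuinely moves it (since $\alpha_m(\lambda)(0)\neq 0$ for $\lambda\in G'_n$ by asymptotic freeness), and that all the sets $(h(A'_n))_{h\in H_n}$ come out pairwise disjoint --- which is condition (3) of Theorem \ref{thm:cara highly faithful pmp} for every finite subset of $\Gamma*\Lambda$ at once. Your items (2) and (3) and your reduction of (4) to (1) are fine, and your identification of the conjugates $g\lambda g\inv$ as the dangerous words is exactly right; but the inductive, multi-copy construction is the missing idea, not a routine general-position argument.
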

\begin{proof}
Let $(G_n)_{n\in\N}$ be an increasing exhaustive sequence of finite subsets of $\Lambda$ such that $1\in G_0$. For all $n\in\N$,  let $G'_n:= G_n\setminus\{1\}$ and $F'_n=F_n\setminus\{1\}$.
For all $n\in\N$ and $k\in\{0,...,n\}$, define the following finite subsets of $\Gamma*\Lambda$:
$$\left\lbrace \begin{array}{rl}I_{k,n}&:=\underbrace{(G'_nF'_n)(G'_nF'_n)\cdots (G'_n F'_n)}_{k \text{ times}} G_n \text{ and}\\ J_{k,n}&:=F'_nI_{k,n},\end{array}\right.$$
where by convention $I_{0,n}=G_n$.

Then let $H_n=\bigcup_{k=0}^n(I_{k,n}\cup J_{k,n})$. The sequence $(H_n)$ is clearly an increasing exhaustive sequence of finite subsets of $\Gamma*\Lambda$.

We will define the $\Lambda$-action piece by piece, so that for every $n\in\N$, the set $\bigcup_{f\in F_n} fA_n$ is $\Lambda$-invariant and $A_n$ witnesses the fact that the $\Gamma*\Lambda$-action is highly faithful for the finite set $H_n$ in the following sense: there is a smaller $A'_n\subseteq A_n$ such that the collection $(hA'_n)_{h\in H_n}$ is made of disjoint sets.

So let us fix $n\in\N$. 
Since the sequence of pointed $\Lambda$-actions $(X_m,\alpha_m,o_m)_{m\in\N}$ is asymptotically free, by Lemma \ref{lem:asfreemoves} we find $m\in\N$ such that for all distinct $\lambda,\lambda'\in I_{0,n}$ we have $\alpha_m(\lambda)(o_m)\neq \alpha_m(\lambda')(o_m)$. Let $k_m$ be the cardinality of the set $X_m$; we may as well assume that $X_m=\{0,...,k_m-1\}$ and $o_m=0$. We also fix a subset $A'_n$ of $A_n$ of measure $\epsilon_n$, where $\epsilon_n$ is a fixed positive real such that $\epsilon_n<\frac{\mu(A_n)}{k_m\abs{H_n}}$.

Since $\epsilon_n<\frac{\mu(A_n)}{k_m+1}$ we can find disjoint subsets $C_1,...,C_{k_m-1}\subseteq A_n$ of measure $\epsilon_n$ which are all disjoint from $A'_n$, and we let $C_0:=A'_n$.
 
We define the $\Lambda$-action on $\bigcup_{k=0}^{k_m-1} C_k$ via a chosen homomorphism $\iota:\Lambda\to[\mathcal R_{\restriction{\bigcup_{k\in X_m}C_k}}]$ provided by Lemma \ref{lem: S(K) in [R]} applied to the $\Lambda$-action $\alpha_m$ on $X_m$, where $\mathcal R$ is the measure-preserving equivalence relation induced by the $\Gamma$-action on $(X,\mu)$. Recall that $A'_n=C_0$. Since $\lambda(C_0)=C_{\alpha_m(\lambda)(0)}$, we see that by construction the family $(\lambda(A'_n))_{\lambda\in I_{0,n}}$ is made of disjoint subsets of $A_n$. Moreover, since  the family $(f(A_n))_{f\in F_n}$ is made of disjoint sets, we see that the equality $J_{0,n}=F'_nI_{0,n}$ yields that the family
$(h(A'_n))_{h\in I_{0,n}\cup J_{0,n}}$
is made of disjoint sets.

The above setup initializes the following construction for $l=0$: inductively on $l\in\{0,...,n\}$ we will now define the $\Lambda$-action on bigger and bigger sets. So suppose that for some $l\in\{0,...,n-1\}$, we have constructed a $\Lambda$-action on $$\bigsqcup_{h\in \bigcup_{k=0}^lI_{k,n}}h(A'_n)\sqcup \bigsqcup_{h\in \bigcup_{k=0}^{l-1} J_{k,n}}h(A'_n)$$ satisfying the following assumptions
\begin{enumerate}[(a)]
\item[(a)] the $\Lambda$-action is conjugate to $\alpha_m$ when restricted to any orbit,
\item[(b)] the $\Lambda$-action preserves the $\Gamma$-orbits and
\item[(c$_l$)] for any $k\leq l$ and any $h\in I_{k,n}$, the set $h(A'_n)$ is a subset of $A_n$.
\end{enumerate}

Since  the family $(f(A_n))_{f\in F_n}$ is made of disjoint sets, condition (c) and the fact that $(h(A'_n))_{h\in \bigcup_{k=0}^lI_{k,n}}$ is disjoint implies that the family
$$(h(A'_n))_{h\in \bigcup_{k=0}^l J_{k,n}}$$
is actually made of disjoint sets which are all disjoint from $A_n$. Let us fix a family $$(C_{h,k})_{{k\in\{1,...,k_m-1\}},h\in J_{l,n}}$$ of disjoint subsets of $A_n$ of measure $\epsilon_n$ such that they are also disjoint from the set $\bigsqcup_{h\in \bigcup_{k=0}^lI_{k,n}}h(A'_n)$ (here we fully use the condition $\epsilon_n<\frac{\mu(A_n)}{k_m\abs{H_n}}$). For $h\in J_{l,n}$, let $C_{h,0}:=h(A'_n)$. Using again Lemma \ref{lem: S(K) in [R]}, we define for every $h\in J_{l,n}$ the $\Lambda$-action on $\bigsqcup_{i=0}^{k_m-1}C_{h,i}$ so that it preserves the $\mathcal R$-classes, that it is conjugate to $\alpha_m$ when restricted to an orbit and that for all $i\in\{0,...,k_m\}$, we have $\lambda(C_{h,i})=C_{h,\alpha_m(\lambda)(i)}$.

Now  the $\Lambda$-action is also defined on $\bigsqcup_{h\in I_{l,n}}\bigsqcup_{k=0}^{k_m-1}C_{h,k}$. And every $h\in I_{l+1,n}$ is of the form $h=\lambda \bar h$ for some $\lambda\in G'_n$ and $\bar h\in J_{l,n}$ so that $h(A'_n)=\lambda \bar h(A'_n)=\lambda(C_{\bar h,0})=C_{\bar h,\alpha_m(\lambda)(0)}$. Since $\alpha_m(\lambda)(0)\neq 0$, we see that the family $(h(A'_n))_{h\in \bigcup_{k=0}^{l+1}I_{k,n}}$ is made of disjoint subsets of $A_n$. This implies that the family $(h(A'_n))_{h\in \bigcup_{k=0}^{l+1}I_{k,n}\cup J_{k,n}}$ is disjoint, so we have now constructed 
a $\Lambda$-action on $$\bigsqcup_{h\in \bigcup_{k=0}^{l+1}I_{k,n}}h(A'_n)\sqcup \bigsqcup_{h\in \bigcup_{k=0}^{l} J_{k,n}}h(A'_n)$$
which satisfies conditions (a), (b) and (c$_{l+1}$). 

 Now for every $n\in\N$ we have defined the $\Lambda$-action on a subset of $\bigcup_{f\in F_n}fA_n$ and we declare it to be trivial anywhere else. By construction, for every $n\in\N$ the family $(h(A'_n))_{h\in H_n}$ is made of disjoint sets so that the induced $\Gamma*\Lambda$-action is highly faithful: condition (1) is thus satisfied. Conditions (2), (3) and (4) also follow from the construction.
\end{proof}

\section{Topological generators in the hyperfinite case}\label{sec:top gen hyperfinite}

In this section, we get more flexibility in the construction from \cite{lm14nonerg} of topological generators for the full group of the hyperfinite ergodic measure-preserving equivalence relation $\mathcal R_0$. 

\subsection{The equivalence relation $\mathcal R_0$}

Recall that $\mathcal R_0$ is defined on the space of infinite binary sequences $\{0,1\}^\N$ equipped with the product Bernoulli probability measure $\bigotimes_{n\in\N}\frac 12(\delta_0+\delta_1)$. By definition, two sequences $(x_i)_{i\in\N}$ and $(y_i)_{i\in\N}$ are $\mathcal R_0$-equivalent if they are the same up to a finite number of indices, that is, if there is $N\in\N$ such that for all $i\geq N$, we have $x_i=y_i$.

Let us now introduce a bit of notation. Any finite binary sequence $s\in \{0,1\}^n$ defines a subset $N_s$ of the product space $\{0,1\}^\N$ consisting of all the sequences starting by $s$, i.e. $$N_s:=\{x\in \{0,1\}^\N: x_i=s_i\text{ for }  i\in\{0,..., n-1\}\}.$$
We can see elements $a\in \{0,1\}^n$ and $b\in\{0,1\}^\N\cup\bigcup_{n\in\N}\{0,1\}^n$ as words in $\{0,1\}$, and denote their concatenation by $a\smallfrown b$. For $\epsilon\in\{0,1\}$ and $n\in\N$,  $\epsilon^n$ is the word $(x_i)_{i=1}^n\in \{0,1\}^n$ defined by $x_i=\epsilon$.

Let $n\in\N$. The group $\mathfrak S_{\{0,1\}^n}$ is the group of permutations of the set $\{0,1\}^n$. There is a natural inclusion $\alpha_n:\mathfrak S_{\{0,1\}^n}\into\mathfrak S_{\{0,1\}^{n+1}}$ given by 
$$\alpha_n(\sigma)(x_0,...,x_{n})=(\sigma(x_0,...,x_{n-1}),x_{n})$$
for $\sigma\in\mathfrak S_{\{0,1\}^n}$ and $(x_0,...,x_{n})\in\{0,1\}^{n+1}$. Let $\mathfrak S_{\{0,1\}^{<\infty}}$ be the inductive limit of these groups, called the group of \textbf{dyadic permutations}.

The key feature of $\mathfrak S_{\{0,1\}^{<\infty}}$ is that it acts in a measure-preserving way on $(\{0,1\}^\N,\lambda)$ as follows: for $\sigma\in\mathfrak S_{\{0,1\}^n}$, $s\in \{0,1\}^n$ and $x\in \{0,1\}^\N$,
$$\sigma(s\smallfrown x)=\sigma(s)\smallfrown x.$$
It is straightforward to check that the orbit equivalence relation induced by this action is $\mathcal R_0$. To avoid confusion, when we see $\mathfrak S_{\{0,1\}^n}$ as a subgroup of $[\mathcal R_0]$ we denote it by $\Stild_{\{0,1\}^n}$.

The following proposition belongs to the folklore, for a proof see \cite[Prop. 3.8]{MR2583950}.
\begin{prop}\label{permdense}The group of dyadic permutations is dense in the full group of $\mathcal R_0$.
\end{prop}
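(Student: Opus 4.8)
The plan is to write $[\mathcal R_0]$ as an increasing union of more tractable full groups and to approximate the elements of each of these by dyadic permutations. For $n\in\N$ let $\mathcal R^{(n)}$ be the measure-preserving equivalence relation defined by $x\,\mathcal R^{(n)}\,y$ if and only if $x_i=y_i$ for all $i\geq n$; this is exactly the relation $\mathcal R_{\Stild_{\{0,1\}^n}}$ generated by the dyadic permutations of level $n$. The $\mathcal R^{(n)}$ are increasing and their union (hence their join) is $\mathcal R_0$, so their full groups form an increasing chain and Theorem \ref{ktdense} gives that $\bigcup_{n\in\N}[\mathcal R^{(n)}]$ is dense in $[\mathcal R_0]$. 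Since $\overline{\Stild_{\{0,1\}^{<\infty}}}$ is a closed subgroup, it therefore suffices to prove that every element of every $[\mathcal R^{(n)}]$ is a uniform limit of dyadic permutations: then $\overline{\Stild_{\{0,1\}^{<\infty}}}$ contains the dense subgroup $\bigcup_n[\mathcal R^{(n)}]$, forcing $\overline{\Stild_{\{0,1\}^{<\infty}}}=[\mathcal R_0]$.

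So I would fix $n\in\N$, $U\in[\mathcal R^{(n)}]$ and $\epsilon>0$. The first step is a disintegration over the tail. Writing each $x\in\{0,1\}^\N$ as $x=s\smallfrown t$ with $s\in\{0,1\}^n$ and $t\in\{0,1\}^{\{n,n+1,\dots\}}$, the condition $U\in[\mathcal R^{(n)}]$ says precisely that $U$ preserves each finite class $\{s'\smallfrown t:s'\in\{0,1\}^n\}$. Each such class carries the uniform conditional measure, so $U$ restricts on almost every class to a permutation $\pi_t\in\mathfrak S_{\{0,1\}^n}$, and $t\mapsto\pi_t$ is measurable; thus $U(s\smallfrown t)=\pi_t(s)\smallfrown t$ almost everywhere.

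The heart of the argument is then to replace the arbitrary measurable dependence of $\pi_t$ on the infinite tail $t$ by a dependence on finitely many coordinates. Since $\mathfrak S_{\{0,1\}^n}$ is finite, each set $\{t:\pi_t=\pi\}$ is Borel, and approximating these finitely many sets by cylinder sets (equivalently, by martingale convergence for the filtration by initial coordinates) produces, for $N$ large enough, a map $\tilde\pi$ depending only on the coordinates $x_n,\dots,x_{N-1}$ with $\lambda(B)<\epsilon$, where $B:=\{x:\pi_{t(x)}\neq\tilde\pi_{t(x)}\}$. Let $V$ act by $V(s\smallfrown t)=\tilde\pi_t(s)\smallfrown t$. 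Since $U(x)=V(x)$ whenever $\pi_{t(x)}=\tilde\pi_{t(x)}$, we get $\{x:U(x)\neq V(x)\}\subseteq B$ and hence $d_u(U,V)\leq\lambda(B)<\epsilon$.

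It remains to observe that $V$ is a genuine dyadic permutation. Indeed $\tilde\pi$ permutes the first $n$ coordinates by a rule depending only on coordinates $n,\dots,N-1$ and leaves all coordinates $\geq N$ fixed, so the map $(x_0,\dots,x_{N-1})\mapsto(\tilde\pi_{(x_n,\dots,x_{N-1})}(x_0,\dots,x_{n-1}),x_n,\dots,x_{N-1})$ is a bijection of $\{0,1\}^N$ (it is a bijection on each block with fixed $(x_n,\dots,x_{N-1})$ and preserves that block), i.e. an element of $\Stild_{\{0,1\}^N}\subseteq\Stild_{\{0,1\}^{<\infty}}$ that coincides with $V$. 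This yields the desired dyadic approximation of $U$. I expect the only real obstacle to be the middle step, namely passing from the infinite-tail dependence of $\pi_t$ to a finite-coordinate dependence with control on $d_u$; the surrounding steps are bookkeeping with the block structure of $\mathcal R_0$.
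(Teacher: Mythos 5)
Your argument is correct. Note that the paper itself gives no proof of this proposition: it is quoted as folklore with a pointer to \cite[Prop. 3.8]{MR2583950}, so there is no internal proof to compare against; your route --- exhausting $\mathcal R_0$ by the finite relations $\mathcal R^{(n)}$ generated by the level-$n$ dyadic permutations, invoking Theorem \ref{ktdense} (here the full groups are nested, so the union is already a group), and then approximating $U\in[\mathcal R^{(n)}]$ by making the fibrewise permutation $\pi_t$ depend on only finitely many tail coordinates --- is the standard one. The only step needing care is the one you yourself flag: you must choose $\tilde\pi$ to be a \emph{single} permutation on each cylinder block in the coordinates $x_n,\dots,x_{N-1}$ (e.g.\ a measurable majority selection using the conditional expectations of the finitely many sets $\{t:\pi_t=\pi\}$), rather than approximating each set $\{t:\pi_t=\pi\}$ by a cylinder set separately, since the latter could produce overlaps or gaps; with that choice $V$ is automatically a bijection, martingale convergence gives $d_u(U,V)<\epsilon$, and $V\in\Stild_{\{0,1\}^N}$ exactly as you describe.
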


The \textbf{odometer} is the map $T_0\in\Aut(\{0,1\}^\N,\lambda)$ defined by
$$(x_i)_{i\in\N}\in \{0,1\}^\N\mapsto 0^{n-1}1\smallfrown (x_i)_{i>n},$$
where $n$ is the first integer such that $x_n=0$ (note that this is well defined on a set of full measure). This can be understood as adding $(1,0,0,...)$ to $(x_i)_{i\in\N}$ with right carry. One can check that $T_0$ generates $\mathcal R_0$.

Let $n\in\N$, then we define a finite odometer $\sigma_n\in\mathfrak S_{\{0,1\}^n}$ by 
$$\sigma_n((s_i)_{i=0}^{n-1})=\left\{\begin{array}{ll}0^n &\text{if }(s_i)=1^n  \\0^{k-1}1\smallfrown (s_i)_{i>k} & \text{else, where }k \text{ is the first integer such that }s_k=0. \end{array}\right.
$$
We denote by $T_n$ the corresponding element in $\Stild_{\{0,1\}^n}$.
Note that by definition, $T_n$ and $T_0$ coincide on $\{0,1\}^\N\setminus N_{1^n}$. 

\subsection{Modified topological generators in the hyperfinite case}

Let $n\geq 2$, and define $\tau_n\in\mathfrak S_{\{0,1\}^n}$ to be the transposition which exchanges $0^{n-1}1$ and $1^{n-1}0$. Let $U_n$ be the corresponding element of $\Stild_{\{0,1\}^n}$, that is, the element of $[\mathcal R_0]$ implementing the action of $\tau_n$ on $2^\N$. Note that the support of $U_n$ is $N_{0^{n-1}1}\sqcup N_{1^{n-1}0}$, so that the supports of the $U_n$'s are all disjoint. 

The next lemma boils down to the well-known fact that the symmetric group over $2^n$ elements is generated by any $2^n$-cycle $\sigma$ along with a transposition $\tau$ exchanging two $\sigma$-consecutive elements. For a detailed proof see \cite[Lem. 4.3]{lm14nonerg}.

\begin{lem}\label{generiota}The group $\Stild_{\{0,1\}^n}$ is contained in the group generated by $T_0$ and $U_n$. \end{lem}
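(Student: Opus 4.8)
The plan is to establish the inclusion $\Stild_{\{0,1\}^n}\subseteq\la T_0,U_n\ra$ by producing, inside $\la T_0,U_n\ra$, enough transpositions to generate all of $\mathfrak S_{\{0,1\}^n}$. I would identify $\{0,1\}^n$ with $\{0,\dots,2^n-1\}$ by reading a word $s_0\cdots s_{n-1}$ as the integer $\sum_i s_i2^i$; this is the ordering for which the finite odometer $\sigma_n$ (hence $T_n$) is the $2^n$-cycle $m\mapsto m+1\ \mathrm{mod}\ 2^n$. Under this identification $0^{n-1}1$ is $2^{n-1}$ and $1^{n-1}0$ is $2^{n-1}-1$, so $\tau_n$ (hence $U_n$) is the transposition of the two $\sigma_n$-consecutive integers $2^{n-1}-1$ and $2^{n-1}$. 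Since the adjacent transpositions $(m\ \ m+1)$ for $0\le m\le 2^n-2$ generate $\mathfrak S_{\{0,1\}^n}$ (this underlies the cited fact that a $2^n$-cycle together with a transposition of two consecutive elements generates the full symmetric group), it then suffices to exhibit each such adjacent transposition as an element of $\la T_0,U_n\ra$.

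The mechanism is conjugation of $U_n$ by powers of the odometer $T_0$. Writing a point as a pair $(s,x)$ with $s\in\{0,1\}^n$ the block and $x\in\{0,1\}^\N$ the tail, $U_n$ is the transposition swapping the cylinders over blocks $2^{n-1}-1$ and $2^{n-1}$ while leaving the tail fixed, and acting trivially elsewhere. I would compute that $T_0^k U_n T_0^{-k}$ is the transposition swapping the blocks $2^{n-1}-1+k$ and $2^{n-1}+k$ (again fixing the tail), that is, the adjacent transposition $(\,2^{n-1}-1+k\ \ 2^{n-1}+k\,)$. Letting $k$ run over $-(2^{n-1}-1)\le k\le 2^{n-1}-1$ makes the value $2^{n-1}-1+k$ sweep the whole range $\{0,\dots,2^n-2\}$, so every adjacent transposition is realized in $\la T_0,U_n\ra$, and the desired inclusion follows at once (in particular $T_n=\sigma_n$, being a product of these transpositions, lies in $\la T_0,U_n\ra$).

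The one real subtlety, which I expect to be the main obstacle, is checking that each conjugate $T_0^k U_n T_0^{-k}$ genuinely lands back in $\Stild_{\{0,1\}^n}$ rather than leaking into the tail coordinates. Because $T_0$ is the infinite odometer, i.e. addition of $1$ with right carry, the key requirement is that adding or subtracting $k$ to the block values $2^{n-1}-1$ and $2^{n-1}$ never produces a carry out of, nor a borrow from, the first $n$ coordinates; otherwise $T_0^{\pm k}$ would send one of the two cylinders to a cylinder at a deeper level and the conjugate would fail to be a level-$n$ dyadic permutation. A direct verification shows this holds exactly when $2^{n-1}+k<2^n$ and $2^{n-1}-1+k\ge 0$, namely for $\abs{k}\le 2^{n-1}-1$, which is precisely the range used above. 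This also explains why it is convenient that $\tau_n$ was chosen to swap the two \emph{middle} values: it leaves room to shift the transposition both upward and downward and thereby reach every adjacent pair without overflow. Granting this carry computation, $\Stild_{\{0,1\}^n}$ is generated by transpositions lying in $\la T_0,U_n\ra$, which is exactly the claim.
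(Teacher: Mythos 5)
Your proof is correct and follows essentially the same route as the paper, which reduces the lemma to the standard fact that a $2^n$-cycle together with a transposition of two consecutive elements generates the symmetric group and defers the details to the cited reference. In particular, you correctly isolate and resolve the only genuine subtlety — that the generator is the infinite odometer $T_0$ rather than the finite cycle $T_n$, so the conjugates $T_0^kU_nT_0^{-k}$ remain level-$n$ dyadic permutations only when no carry leaves the first $n$ coordinates, i.e.\ for $\abs{k}\le 2^{n-1}-1$, which is exactly enough to produce every adjacent transposition.
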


We see that if we could produce $U\in[\mathcal R_0]$ such that the closed subgroup generated by $U$ contains infinitely many $U_n$'s, the fact that $\Stild_{\{0,1\}^{<\infty}}$ is dense $[\mathcal R_0]$ coupled with the previous lemma would yield that $T_0$ and $U$ generate a dense subgroup of $[\mathcal R_0]$. Although this cannot be done, the main idea of \cite{lm14nonerg} is to find $U\in[\mathcal R_0]$ such that the closed subgroup generated by $U$ contains infinitely many $U_n$'s up to an error which tends very fast to zero, so that $\{T_0,U\}$  generates a dense subgroup of $[\mathcal R_0]$. 

To this end, we now fix for every $n\in\N$ a constant $\kappa(n)$ such that any element of $\Stild_{\{0,1\}^n}$ can be written as a word in $U_n$ and $T_0$ of length less than $\kappa(n)$. For all $p,q\in\N$, we will use the function
$$\sqrt[2^p]{\cdot}:\Stild(\{0,1\}^q)\to\Stild(\{0,1\}^{p+q})$$
defined in \cite{lm14nonerg}, which satisfies that for all $U\in\Stild(\{0,1\}^{<\infty})$, one has $\left(\sqrt[2^p]{U}\right)^{2^p}=U$ and $\sqrt[2^p]{U}$ has the same support as $U$.

If $T\in\Aut(X,\mu)$ and $A$ is a Borel subset of $X$ which is $T$-invariant, we define the induced transformation $T_A$ with respect to $A$ by: for all $x\in X$,
$$T_A(x)=\left\{\begin{array}{ll}T(x) & \text{ if }x\in A\\x & \text{ else.}\end{array}\right.$$
 We can now state and prove a version of \cite[Thm. 1.4]{lm14nonerg}  where we allow for some error. The argument is very close to the original one, but we give a full proof for the convenience of the reader.

\begin{thm}\label{thm:topogenR0}Given any $\epsilon>0$, there exists an increasing sequence of integers $(n_k)_{k\in\N}$ and a sequence of positive reals $(\delta_k)_{k\in\N}$ such that whenever we have for all $k\in\N$ a $\sqrt[2^{k-1}]{U_{n_k}}$-invariant set $B_k\subseteq \supp U_{n_k}$ with $\mu(B_k)>\mu(\supp U_{n_k})-\delta_k$,
if we let 
$$U:=\prod_{k=0}^{+\infty}\sqrt[2^{k}]{U_{n_k}}_{B_k},$$
then the set $\{T_0,U\}$ generates a dense subgroup of $[\mathcal R_0]$ and  we have $\mu(\supp U)<\epsilon$.
\end{thm}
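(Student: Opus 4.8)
The goal is to control how close the closed group $\la T_0, U\ra$ gets to each generator $U_{n_k}$ of the dense group $\Stild_{\{0,1\}^{<\infty}}$, so I would organize the proof around extracting approximations of $U_{n_k}$ from powers of $U$, and then arrange the parameters $(n_k)$ and $(\delta_k)$ so that the approximation errors and the total support $\mu(\supp U)$ are both summably small. The key structural fact to exploit is that the transformations $\sqrt[2^k]{U_{n_k}}$ have pairwise disjoint supports: since $\supp U_{n_k}=N_{0^{n_k-1}1}\sqcup N_{1^{n_k-1}0}$ and we choose $(n_k)$ strictly increasing, these supports are disjoint (indeed their measures decay geometrically), so the infinite product defining $U$ makes sense, the factors commute, and on $\supp\sqrt[2^k]{U_{n_k}}$ the map $U$ agrees with the induced transformation $\sqrt[2^k]{U_{n_k}}_{B_k}$. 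Consequently $U^{2^k}$ agrees, on the piece where the $B_k$'s cause no trouble, with $\left(\sqrt[2^k]{U_{n_k}}\right)^{2^k}=U_{n_k}$.

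\textbf{Extracting the generators.} First I would compute $U^{2^k}$ and compare it to $U_{n_k}$. On the support of the $j$-th factor with $j<k$, raising to the power $2^k$ gives $\left(\sqrt[2^j]{U_{n_j}}_{B_j}\right)^{2^k}=\mathrm{id}$ (up to the set where $B_j$ fails to be invariant), since $2^j\mid 2^k$ and $\left(\sqrt[2^j]{U_{n_j}}\right)^{2^j}=U_{n_j}$ has order dividing $2$ as an involution—so its $2^k$-th power is the identity; on the $k$-th factor it gives exactly $U_{n_k}$ restricted to $B_k$; and on the factors with $j>k$ it contributes error supported in $\bigsqcup_{j>k}\supp U_{n_j}$, whose total measure I can force to be tiny by choosing $n_{k+1}$ large. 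Thus $d_u(U^{2^k},U_{n_k})$ is bounded by $\mu(\supp U_{n_k}\setminus B_k)<\delta_k$ plus the measure of all later supports plus the defect coming from the non-invariant parts of the earlier $B_j$. All of these terms can be made smaller than any prescribed $\eta_k\to 0$ by choosing $\delta_k$ small and $n_{k+1}$ large once $n_0,\dots,n_k$ and $\delta_0,\dots,\delta_k$ are fixed—this is the standard diagonal/inductive choice of parameters.

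\textbf{Density.} Once $d_u(U^{2^k},U_{n_k})\to 0$, the closed group $\overline{\la T_0,U\ra}$ contains each $U_{n_k}$ as a limit of its own elements, hence contains $U_{n_k}$. By Lemma~\ref{generiota} it then contains $\Stild_{\{0,1\}^{n_k}}$ for every $k$, and since $(n_k)$ is increasing and exhausts $\N$, it contains the whole dyadic permutation group $\Stild_{\{0,1\}^{<\infty}}$, which is dense in $[\mathcal R_0]$ by Proposition~\ref{permdense}. Therefore $\{T_0,U\}$ generates a dense subgroup. Separately, $\mu(\supp U)\leq\sum_k\mu(\supp U_{n_k})=\sum_k 2\cdot 2^{-n_k}$, which I arrange to be less than $\epsilon$ by taking $n_0$ large enough and $(n_k)$ growing fast.

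\textbf{Main obstacle.} The delicate point is that we are only given $B_k$ to be $\sqrt[2^{k-1}]{U_{n_k}}$-invariant, not $\sqrt[2^k]{U_{n_k}}$-invariant, so taking $U$ to the power $2^k$ does not cleanly return $U_{n_k}_{B_k}$: the induced transformations do not commute perfectly with powering on the boundary of $B_k$, and each squaring can double the ``bad'' region. The heart of the argument is to show this accumulated defect is still controlled—bounded by a fixed multiple of $\mu(\supp U_{n_k}\setminus B_k)<\delta_k$—so that choosing $\delta_k$ small suffices. This is exactly the error-tracking that distinguishes this statement from the exact version \cite[Thm.~1.4]{lm14nonerg}, and it is where I would spend the most care.
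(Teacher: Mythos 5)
Your computation of $U^{2^k}$ and your support estimate both match the paper's proof, but your density step has a genuine gap. From $d_u(U^{2^k},U_{n_k})\to 0$ you conclude that the closed group ``contains each $U_{n_k}$ as a limit of its own elements, hence contains $U_{n_k}$.'' This does not follow: for each \emph{fixed} $k$ the construction produces exactly one element of $\la U\ra$, namely $U^{2^k}$, lying within some $\eta_k>0$ of $U_{n_k}$, and nothing provides better and better approximants of that single $U_{n_k}$. The convergence is along the diagonal ($U^{2^k}$ tracks the moving target $U_{n_k}$), so $\overline{\la T_0,U\ra}$ need not contain any $U_{n_k}$, and Lemma \ref{generiota} cannot be invoked the way you do. The paper's proof avoids this by making the error small relative to a word-length constant: it fixes $\kappa(n)$ such that every element of $\Stild_{\{0,1\}^{n}}$ is a word of length less than $\kappa(n)$ in $T_0$ and $U_{n}$, and arranges $d_u(U^{2^k},U_{n_k})<\epsilon_k/\kappa(n_k)$ (this is where $n_{k+1}$ must be chosen large and $\delta_k$ small \emph{after} $n_k$ is known, since $\kappa(n_k)$ enters the bound). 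By bi-invariance of $d_u$, every element of $\Stild_{\{0,1\}^{n_k}}$ then lies within $\epsilon_k$ of $\la T_0,U\ra$; since any fixed dyadic permutation belongs to $\Stild_{\{0,1\}^{n_k}}$ for all large $k$ and $\epsilon_k\to 0$, the closed group contains $\Stild_{\{0,1\}^{<\infty}}$ and one concludes by Proposition \ref{permdense}. Your phrase ``smaller than any prescribed $\eta_k\to 0$'' could in principle absorb this, but you never identify $\eta_k=\epsilon_k/\kappa(n_k)$ as the required target, and the deduction you actually state is the incorrect one.

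A secondary point: you locate ``the heart of the argument'' in an alleged accumulation of defects caused by $B_k$ being only $\sqrt[2^{k-1}]{U_{n_k}}$-invariant, with each squaring doubling the bad region. No such accumulation occurs: for a $T$-invariant set $A$ one has $(T_A)^m=(T^m)_A$ exactly, so the only error contributed by the $k$-th factor is $d_u\bigl({U_{n_k}}_{B_k},U_{n_k}\bigr)\leq\mu(\supp U_{n_k}\setminus B_k)<\delta_k$, incurred once. (The mismatch between the exponents $2^{k-1}$ and $2^{k}$ in the statement appears to be a typo; in the application in Section \ref{sec:pf of main thm} the sets $B_k$ are in fact $\sqrt[2^{k}]{U_{n_k}}$-invariant.) The real delicacy is the $\kappa(n_k)$ bookkeeping described above, not the invariance of the $B_k$.
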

\begin{proof}
Fix a sequence $(\epsilon_k)$ of positive real numbers such that $\epsilon_k\to 0$. 

\begin{claim}It suffices find sequences $(n_k)$ and $(\delta_k)$ with $\sum_{k\in\N}2^{-n_k}<\epsilon$ such that whenever we have for all $k\in\N$ a $\sqrt[2^{k-1}]{U_{n_k}}$-invariant set $B_k\subseteq \supp U_{n_k}$ with $\mu(B_k)>\mu(\supp U_{n_k})-\delta_k$,
if we let
$$U:=\prod_{k=0}^{+\infty}\sqrt[2^{k-1}]{U_{n_k}}_{B_k},$$
then for all $k\in\N$, there exists  $U'\in \la U\ra$ such that $d_u(U_{n_k},U')<\epsilon_k/\kappa(n_k)$. 
\end{claim}
\begin{proof}[Proof of the claim]
Assuming that the above conditions are satisfied, fix $k\in\N$ and $U'\in\la U\ra$ such that $d_u(U_{n_k},U')<\epsilon_k/\kappa(n_k)$. Since every element of $\Stild_{\{0,1\}^{n_k}}$ can be written as a word in $T_0$ and $U_{n_k}$ of length less than $\kappa(n_k)$ (see Lem. \ref{generiota} and the definition of $\kappa(n_k)$), we deduce that every element of $\Stild_{\{0,1\}^{n_k}}$ belongs to $\la T_0,U\ra$ up to an error less than $\epsilon_k$. Now $\epsilon_k\to 0$ so the closed group generated by $\{T_0,U\}$ contains $\Stild_{\{0,1\}^{<\infty}}$, hence $\overline{\la T_0,U\ra}=[\mathcal R_0]$ by Proposition \ref{permdense}. And since for all $k\in\N$ we have $\mu(\supp \sqrt[2^{k}]{U_{n_k}}_{B_k})\leq\mu(\sqrt[2^{k}]{U_{n_k}})=2^{-n_k}$, we deduce that $\mu(\supp U)\leq\sum_{k\in\N}2^{-n_k}<\epsilon$ as desired.
\end{proof}

We now build by induction an increasing sequence $(n_k)$ such that $\sum_{k\in\N}2^{-n_k}<\epsilon$ and for all $k\in\N$, we have \begin{equation}\label{nk small}2^{-n_{k+1}-2}<\frac{\epsilon_k}{2\kappa(n_k)} .\end{equation} Then we choose for every $k\in\N$ a positive $\delta_k$ such that \begin{equation}\label{deltak small}\delta_k<\frac{\epsilon_k}{2\kappa(n_k)}\end{equation}
Let us show that such sequences $(n_k)$ and $(\delta_k)$ satisfy the hypotheses of the claim, which will end the proof. So suppose that for every $k\in\N$ we have a $\sqrt[2^{k-1}]{U_{n_k}}$-invariant set $B_k\subseteq \supp U_{n_k}$ with $\mu(B_k)>\mu(\supp U_{n_k})-\delta_k$. First note that all the $\sqrt[2^{k}]{U_{n_k}}_{B_k}$ have disjoint supports, so they commute. We fix $k\in\N$ and compute
\begin{align*}
U^{2^{k}}&=\prod_{l=0}^{+\infty}{\left({\sqrt[2^{l}] {U_{n_l}}}_{B_l}\right)}^{2^{k}}\\
&=\prod_{l=0}^{k-1} \left({U_{n_l}}_{B_l}\right)^{2^{k-l}}\cdot {U_{n_k}}_{B_k}\cdot\prod_{l=k+1}^{+\infty}{\left({\sqrt[2^{l}] {U_{n_l}}}_{B_l}\right)}^{2^{k}}
\end{align*}
Because the $U_{n_l}$'s are involution, the first product is equal to the identity, so that
\begin{equation}\label{product}
U^{2^{k}}={U_{n_k}}_{B_k}\cdot\prod_{l=k+1}^{+\infty}{\left({\sqrt[2^{l}] {U_{n_l}}}_{B_l}\right)}^{2^{k}}.
\end{equation}
We now check that the error term $W_k:=\prod_{l=k+1}^{+\infty}{\left({\sqrt[2^{l}] {U_{n_l}}}_{B_l}\right)}^{2^{k}}$ is small. Because for every $l\in\N$,  ${\left({\sqrt[2^{l}] {U_{n_l}}}_{B_l}\right)}^{2^{k}}$ has same support as ${U_{n_l}}_{B_l}$, the support of $W_k$ has measure smaller than 
\begin{equation}\label{sum}\sum_{l=k+1}^{+\infty}\lambda(\supp U_{n_l})\leq\sum_{l=k+1}^{+\infty}\frac 1{2^{n_l-1}}\end{equation}
Since $(n_l)_{l\in\N}$ is increasing, we have for all $l\geq k+1$, $$\frac 1{2^{n_l-1}}\leq \frac 1{2^{n_{k+1}+(l-k-2)}}$$
We can now bound the right-hand term in (\ref{sum}) and get the inequality 
\begin{align*}\mu(\supp W_k)&\leq\frac1{2^{n_{k+1}}}\cdot \sum_{l={k+1}}^{+\infty} \frac 1{2^{l-k-2}}\\
&\leq\frac 4{2^{n_{k+1}}}\\
&\leq \frac{\epsilon_k}{2\kappa(n_k)},
\end{align*}
the latter inequality being a direct consequence of (\ref{nk small}).
From this and equation (\ref{product}) we deduce
\begin{align*}
d_u(U^{2^{k}},{U_{n_k}}_{B_k})&\leq \mu(\supp(W_k))\\
&\leq\frac{\epsilon_k}{2\kappa(n_k)}.
\end{align*}
Since $B_k$ is a $U_{n_k}$-invariant subset of the support of $U_{n_k}$ such that $\mu(B_k)>\mu(\supp U_{n_k})-\delta_k$ , we have $d_u({U_{n_k}}_{B_k},U_{n_k})<\delta_k<\frac{\epsilon_k}{2\kappa(n_k)}$ by (\ref{deltak small}). We deduce that 
$$d_u(U^{2^k},U_{n_k})<\frac{\epsilon_k}{\kappa(n_k)}$$
so that the theorem now follows from the claim.
\end{proof}

\section{Proof of the main theorem}\label{sec:pf of main thm}

\subsection{A lemma on commuting elements}

\begin{lem}
Let $T,U\in \Aut(X,\mu)$ have disjoint supports, and suppose that there are two relatively prime numbers $p, q\geq 2$ such that \begin{itemize}
\item every $T$-orbit is finite and its cardinality divides a power of $p$ and
\item every $U$-orbit is finite and its cardinality divides a power of $q$.
\end{itemize}
 Then both $T$ and $U$ belong to the closure of the group generated by $TU$ for the uniform topology.
\end{lem}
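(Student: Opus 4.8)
The plan is to exploit the fact that disjoint supports force $T$ and $U$ to commute, so that $(TU)^n = T^nU^n$ for every $n$, and then to approximate $T$ (and, symmetrically, $U$) by carefully chosen powers of $TU$ selected via the Chinese Remainder Theorem.

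First I would record that, since $\supp T$ and $\supp U$ are disjoint and each support is invariant under the corresponding transformation, $T$ and $U$ commute: for $x\in\supp T$ one has $U(x)=x$ and $T(x)\in\supp T$, whence $U(T(x))=T(x)$, and symmetrically on $\supp U$. Writing $A:=\supp T$ and $B:=\supp U$, the commutation gives $(TU)^n=T^nU^n$, and the uniform distance to $T$ splits according to $A$ (where $(TU)^n$ acts as $T^n$, compared with $T$) and $B$ (where $(TU)^n$ acts as $U^n$, compared with the identity), everything else being fixed:
\[
d_u\bigl((TU)^n, T\bigr)=\mu\{x\in A: T^n(x)\neq T(x)\}+\mu\{x\in B: U^n(x)\neq x\}.
\]
The first term is small once $n\equiv 1\pmod{p^k}$ for large $k$, and the second once $n\equiv 0\pmod{q^k}$ for large $k$.

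Next, for each $k$ I would invoke the Chinese Remainder Theorem — this is where the coprimality of $p$ and $q$ is essential — to produce $n_k$ with $n_k\equiv 1\pmod{p^k}$ and $n_k\equiv 0\pmod{q^k}$. To control the first term, set $A_k:=\{x\in A: \text{the cardinality of the }T\text{-orbit of }x\text{ divides }p^k\}$; since by hypothesis every orbit cardinality divides \emph{some} power of $p$, the sets $A_k$ increase to $A$ up to a null set, so $\mu(A\setminus A_k)\to 0$. For $x\in A_k$ the orbit cardinality divides $p^k$, hence divides $n_k-1$, so $T^{n_k}(x)=T(x)$; thus the first term is at most $\mu(A\setminus A_k)$. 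Symmetrically, with $B_k:=\{x\in B: \text{the cardinality of the }U\text{-orbit of }x\text{ divides }q^k\}$ one has $\mu(B\setminus B_k)\to 0$, and since $q^k\mid n_k$ the second term is bounded by $\mu(B\setminus B_k)$. Therefore $d_u\bigl((TU)^{n_k},T\bigr)\to 0$, giving $T\in\overline{\langle TU\rangle}$.

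Finally, exchanging the roles of $(T,p)$ and $(U,q)$ and choosing instead $n'_k\equiv 0\pmod{p^k}$ and $n'_k\equiv 1\pmod{q^k}$ yields $U\in\overline{\langle TU\rangle}$ by the identical computation, which completes the proof. The only genuinely delicate point is the measure estimate $\mu(A\setminus A_k)\to 0$: this is precisely where the assumption that each orbit cardinality divides a power of $p$ is used, ensuring that $A$ is exhausted by the $A_k$. The remaining ingredients — the commutation of $T$ and $U$ and the arithmetic selection of exponents via the Chinese Remainder Theorem — are routine.
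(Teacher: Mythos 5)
Your proof is correct and follows essentially the same strategy as the paper's: both exploit the commutation coming from disjoint supports and choose an exponent that is $\equiv 1$ modulo a large power of $p$ and $\equiv 0$ modulo a large power of $q$ (the paper takes $lq^N$ with $lq^N\equiv 1 \pmod{p^N}$, which is your Chinese Remainder step in disguise). The only cosmetic difference is that you obtain $U$ by symmetry whereas the paper notes $U=T\inv(TU)$; both are fine.
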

\begin{proof}
Since $U=T\inv (TU)$, it suffices to show that $T$ belongs to the closure of $\la TU\ra$.  Let $\epsilon>0$, find $N\in\N$ large enough so that there is a Borel set $A$ such that $\mu(A)>1-\epsilon$ and for all $x\in A$;
$$ \abs{\Orb_T(x)}\leq p^N\text{ and }\abs{\Orb_U(x)}\leq q^N$$
Since $p^N$ and $q^N$ are relatively prime, there is $l\in\N$ such that $lq^N\equiv1$ mod $p^N$. 

Note that $T$ and $U$ commute since they have disjoint support and fix $x\in A$. If $x$ belongs to the support of $T$ then $(TU)^{lq^{N}}(x)=T^{lq^N}(x)=T(x)$ because the cardinality of the $T$-orbit of $x$ divides a power of $p$ no greater than $p^N$ and $lq^N\equiv1$ mod $p^N$. If $x$  belongs to the support of $U$ then $(TU)^{lq^{N}}(x)=U^{lq^{N}}(x)=x$ because the cardinality of the $U$-orbit of $x$ divides a power of $q$ no greater than $q^N$, so $(TU)^{lq^{N}}(x)=T(x)$. And if $x$ neither belongs to the support of $T$ nor to the support of $U$, then $(TU)^{lq^{N}}(x)=x=T(x)$. So for all $x\in A$, we have $(TU)^{lq^{N}}(x)=T(x)$. As $\mu(A)>1-\epsilon$, we deduce that $d_u((TU)^{lq^{N}},T)<\epsilon$ and we conclude that $T$ belongs to the closure of $\la TU\ra$.
\end{proof}

A proof by induction yields the following useful corollary.

\begin{cor}\label{cor:obtain elts with disjt sup}
Let $T_1,..,T_n\in \Aut(X,\mu)$ have disjoint supports, and suppose that there are $n$ pairwise relatively prime numbers $p_1,...,p_n\geq 2$ such that for every $k\in\{1,...,n\}$, every $T_k$-orbit is finite and its cardinality divides a power of $p_k$.

Then for all $k\in\{1,...,n\}$, $T_k$ belongs to the closure of the group generated by the product $T_1T_2\cdots T_n$ for the uniform topology.
\end{cor}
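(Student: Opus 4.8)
The plan is to induct on $n$. The case $n=1$ is trivial, and the inductive step from $n-1$ to $n$ (for $n\geq 2$) runs the previous lemma as its engine. Assume the statement for any $n-1$ transformations meeting the hypotheses, and fix $T_1,\dots,T_n$ with pairwise relatively prime $p_1,\dots,p_n\geq 2$ as in the corollary. The idea is to collapse the first $n-1$ factors into a single transformation, thereby reducing to the two-element situation covered by the lemma.

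Concretely, I would set $T:=T_n$ and $U:=T_1T_2\cdots T_{n-1}$, together with $p:=p_n$ and $q:=p_1p_2\cdots p_{n-1}$. Since the supports of the $T_k$ are pairwise disjoint, $\supp U\subseteq\bigcup_{k<n}\supp T_k$ is disjoint from $\supp T_n$, and the factors $T_1,\dots,T_{n-1}$ commute. The step I would check carefully is that every $U$-orbit is finite with cardinality dividing a power of $q$: a point $x$ lies in at most one $\supp T_k$ with $k<n$, and on the orbit of $x$ the transformation $U$ agrees with that single $T_k$, so $\abs{\Orb_U(x)}=\abs{\Orb_{T_k}(x)}$ divides a power of $p_k$, hence a power of $q$ because $p_k\mid q$. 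As $p_n$ is relatively prime to each $p_k$ with $k<n$, it is relatively prime to $q$, so all hypotheses of the lemma are met for the pair $(T,U)$ with the numbers $(p,q)$.

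The lemma then yields that both $T_n$ and $U=T_1\cdots T_{n-1}$ lie in the closure of $\la TU\ra$. Because disjoint supports force all factors to commute, $TU=T_nT_1\cdots T_{n-1}=T_1T_2\cdots T_n$, so in fact $T_n,\,T_1\cdots T_{n-1}\in\overline{\la T_1\cdots T_n\ra}$; this already settles the index $k=n$. For $k<n$, the key point is that $\overline{\la h\ra}$ is a closed subgroup of $\Aut(X,\mu)$ — the uniform metric $d_u$ is bi-invariant, making $\Aut(X,\mu)$ a topological group, so closures of subgroups are again subgroups. Hence the membership $T_1\cdots T_{n-1}\in\overline{\la T_1\cdots T_n\ra}$ upgrades to the inclusion $\overline{\la T_1\cdots T_{n-1}\ra}\subseteq\overline{\la T_1\cdots T_n\ra}$. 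Applying the induction hypothesis to $T_1,\dots,T_{n-1}$ (with $p_1,\dots,p_{n-1}$) gives $T_k\in\overline{\la T_1\cdots T_{n-1}\ra}$ for every $k<n$, and composing the two inclusions completes the step.

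I expect the only genuinely delicate points to be bookkeeping: verifying that the orbits of the collapsed transformation $U$ indeed divide a power of $q$ (which rests on disjointness of supports forcing each $U$-orbit to coincide with a single $T_k$-orbit), and the passage from ``an element lies in a closure'' to ``the corresponding closed subgroups are nested.'' Neither constitutes a real obstacle once one records that $d_u$ is bi-invariant, so that $\overline{\la\cdot\ra}$ always produces a closed subgroup.
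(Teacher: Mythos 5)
Your proof is correct and follows exactly the route the paper intends: the paper simply asserts that ``a proof by induction yields the corollary,'' and your induction --- collapsing $T_1\cdots T_{n-1}$ into a single $U$ whose orbit sizes divide powers of $q=p_1\cdots p_{n-1}$, applying the lemma to the pair $(T_n,U)$, and then nesting the closed subgroups via the induction hypothesis --- is precisely that argument, with the relevant verifications (disjointness of supports, coprimality of $p_n$ and $q$, and that each $U$-orbit coincides with a single $T_k$-orbit) carried out correctly.
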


\subsection{Proof of Theorem \ref{thm:main thm}}

Let us start with an ergodic equivalence relation $\mathcal R$ such that $\mathrm{Cost(\mathcal R)}<m+1$ for some $m\in\N$. Our goal is to find $m+1$ topological generators for the full group of $\mathcal R$ so that the induced $\mathbb F_{m+1}$-action is highly faithful and amenable onto almost every orbit.

By \cite[Thm. 4]{MR0131516}\footnote{See also \cite[9.3.2]{MR776417} for a statement and a proof with a less operator-algebraic flavour.}, we may and do assume that $X=\{0,1\}^\N$ equipped with the product Bernoulli probability measure $\mu=\bigotimes_{n\in\N}\frac 12(\delta_0+\delta_1)$, and that the odometer $T_0$ belongs to the full group of $\mathcal R$. 

Lemma III.5 in \cite{MR1728876} provides a graphing $\Phi$ such that $\Cost(\Phi)<m$ and $\{T_0\}\cup\Phi$ generates $\mathcal R$. Let 
$$c=\frac{\Cost(\Phi)}{m}<1,$$
and fix some odd $p\in\N$ such that $(\frac {p+2} p)c<1$. Splitting the domains of the partial automorphisms in $\Phi$, we find $\Phi_1$,...,$\Phi_m$ of cost $c$ such that $\Phi=\Phi_1\cup\cdots\cup \Phi_m$.

The map $T_0$ induces a free (in particular highly faithful) ergodic $\Z$-action, so we apply item (5) of Theorem \ref{thm:cara highly faithful pmp} and find a sequence $(A_n)_{n\in\N}$ of non-null Borel subsets of $X$ such that the family $(T_0^{i}(A_n))_{\abs i\leq n, n\in\N}$ is disjoint. Up to taking smaller non-null subsets $A'_n\subseteq A_n$ for all $n\in\N$, we can assume that $\mu(\bigsqcup_{n\in\N}\bigsqcup_{i=-n}^nT_0^{i}(A_n))<\epsilon$.

Let $\epsilon:=\frac{1-(\frac {p+2} p)c}2$, using Theorem \ref{thm:topogenR0} we fix  an increasing sequence of integers $(n_k)_{k\in\N}$ and a sequence of positive reals $(\delta_k)_{k\in\N}$ such that whenever we have for all $k\in\N$ a $\sqrt[2^{k-1}]{U_{n_k}}$-invariant set $B_k\subseteq \supp U_{n_k}$ with $\mu(B_k)>\mu(\supp U_{n_k})-\delta_k$,
if we let 
$$U:=\prod_{k=0}^{+\infty}\sqrt[2^{k}]{U_{n_k}}_{B_k},$$
then the set $\{T_0,U\}$ generates a dense subgroup of $[\mathcal R_0]$ and  we have $\mu(\supp U)<\epsilon$, where $\mathcal R_0$ is the measure-preserving equivalence generated by $T_0$.

\begin{claim}We can also assume that for all $k\in\N$,
$$\mu\left(\left(\bigsqcup_{n\in\N}\bigsqcup_{i=-n}^nT_0^i(A_n)\right)\cap \supp U_{n_k}\right)<\frac{\delta_k}{2^{k+1}}.$$
\end{claim}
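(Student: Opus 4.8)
The plan is to obtain the desired sets by shrinking the current $A_n$ one more time, now that the sequences $(n_k)$ and $(\delta_k)$ have been fixed. Write $E:=\bigsqcup_n\bigsqcup_{|i|\leq n}T_0^i(A_n)$ for the error region and $b_k:=\delta_k/2^{k+1}$ for the budget attached to $\supp U_{n_k}$. Since the supports $\supp U_{n_k}$ are pairwise disjoint and $E$ is the disjoint union of the pieces $V_n:=\bigsqcup_{|i|\leq n}T_0^i(A_n)$, we have $\mu(E\cap\supp U_{n_k})=\sum_n\mu(V_n\cap\supp U_{n_k})$ for each $k$. I would allocate the budget across $n$ by setting $\beta_{n,k}:=b_k\,2^{-(n+2)}$, so that $\sum_n\beta_{n,k}=b_k/2<b_k$, and reduce the claim to the per-$n$ task of finding a non-null $A''_n\subseteq A_n$ such that $\mu(V''_n\cap\supp U_{n_k})<\beta_{n,k}$ for every $k$, where $V''_n:=\bigsqcup_{|i|\leq n}T_0^i(A''_n)$.

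For a fixed $n$ this is where the real work lies, as there are infinitely many support-constraints to meet at once while keeping $A''_n$ non-null. The key point is that the total mass of the supports is finite: since $(n_k)$ is strictly increasing, $\sum_k\mu(\supp U_{n_k})=\sum_k 2^{1-n_k}<\infty$. Because $T_0$ is measure-preserving, for each fixed $i$ with $|i|\leq n$ we get $\sum_k\mu\bigl(T_0^{-i}(\supp U_{n_k})\bigr)=\sum_k\mu(\supp U_{n_k})<\infty$, so the first Borel--Cantelli lemma shows that for almost every $x$ one has $T_0^i(x)\in\supp U_{n_k}$ for only finitely many $k$. Taking the finite union over $|i|\leq n$, almost every $x\in A_n$ lies in some
$$G_K:=\{x\in A_n : T_0^i(x)\notin\supp U_{n_k}\ \text{for all }|i|\leq n\text{ and all }k\geq K\}.$$
The family $(G_K)_K$ is increasing and exhausts $A_n$ up to a null set, so some $G_{K_0}$ is non-null.

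On $G_{K_0}$ all constraints with $k\geq K_0$ become automatic, since any $A''_n\subseteq G_{K_0}$ then satisfies $\mu(V''_n\cap\supp U_{n_k})=0<\beta_{n,k}$ for $k\geq K_0$; only the finitely many constraints $k<K_0$ remain. These I would dispatch crudely: as $\mu(V''_n)=(2n+1)\mu(A''_n)$, any non-null $A''_n\subseteq G_{K_0}$ with $\mu(A''_n)<\min_{k<K_0}\beta_{n,k}/(2n+1)$ works, and such a subset exists because $G_{K_0}$ is non-null in an atomless space and the minimum of finitely many positive numbers is positive. Carrying this out for every $n$ and relabelling the resulting sets $A_n$, we obtain $\mu(E\cap\supp U_{n_k})=\sum_n\mu(V''_n\cap\supp U_{n_k})<\sum_n\beta_{n,k}=b_k/2<\delta_k/2^{k+1}$ for all $k$, while shrinking only decreases $\mu(E)$ and preserves both non-nullity and the disjointness of $(T_0^i(A''_n))_{|i|\leq n,\,n\in\N}$. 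The main obstacle is exactly that the thresholds $\delta_k/2^{k+1}$ may decay arbitrarily fast, so no uniform shrinking of the $A_n$ can succeed; the finiteness of $\sum_k\mu(\supp U_{n_k})$, exploited through Borel--Cantelli, is what collapses the infinitely many constraints attached to each set down to finitely many.
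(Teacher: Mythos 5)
Your proof is correct and follows essentially the same strategy as the paper: for each fixed $n$, use the summability of $\sum_k\mu(\supp U_{n_k})=\sum_k 2^{1-n_k}$ to pass to a non-null subset of $A_n$ whose translates avoid all but finitely many of the supports, then shrink its measure to handle the remaining finitely many constraints, and finally sum a geometric allocation over $n$. The only cosmetic difference is that you invoke Borel--Cantelli to find the non-null set $G_{K_0}$, where the paper directly picks $l$ with $\mu\bigl(\bigcup_{k\geq l}\bigcup_{|i|\leq n}T_0^i(\supp U_{n_k})\bigr)<\mu(A_n)/2$ and removes it.
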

\begin{proof}
Let $n\in\N$. Find $l\in\N$ such that the set $\bigcup_{k\geq l}\bigcup_{i=-n}^nT_0^i(\supp U_{n_k})$ has measure less than $\mu(A_n)/2$, and set $A'_n:= A_n\setminus \bigcup_{k\geq l}\bigcup_{i=-n}^nT_0^i(\supp U_{n_k})$ which is non-null. Then by construction for all $k\geq l$, the set $\bigsqcup_{i=-n}^nT_0^i(A'_n)$ is disjoint from $\supp U_{n_k}$. 

We now take $A''_n\subseteq A'_n$ non-null such that for all $k<l$,
$$\mu\left(\bigsqcup_{i=-n}^nT_0^i(A''_n)\right)< \frac{\delta_k}{2^{k+n+2}}$$
Then for all $k,n\in\N$, we have
$$\mu\left(\bigsqcup_{i=-n}^nT_0^i(A''_n)\cap\supp U_{n_k}\right)< \frac{\delta_k}{2^{k+n+2}}$$
But now a straightforward calculation yields that for all $k\in\N$
$$\mu\left(\left(\bigsqcup_{n\in\N}\bigsqcup_{i=-n}^nT_0^i(A''_n)\right)\cap \supp{U_{n_k}} \right)< \frac{\delta_k}{2^{k}}$$
so the sequence $(A''_n)$ is as desired.
\end{proof}

For all $k\in\N$, let 
$$\overline B_k:=\bigcup_{j=0}^{2^{k+1}-1}\sqrt[2^{k}]{U_{n_k}}^{\,j}\left(\supp U_{n_k}\cap \left(\bigsqcup_{n\in\N}\bigsqcup_{i=-n}^nT_0^i(A_n)\right)\right)$$
 Since $\sqrt[2^{k}]{U_{n_k}}$ has order $2^{k+1}$, the set $\overline B_k$ is $\sqrt[2^{k}]{U_{n_k}}$-invariant. Moreover by the previous claim $\mu(\overline B_k)<\delta_k$. We let $B_k:=\supp U_{n_k}\setminus \overline B_k$, and then we define
 $$U:=\prod_{k=0}^{+\infty}\sqrt[2^{k}]{U_{n_k}}_{B_k},$$
By construction, the set $\{T_0,U\}$ generates a dense subgroup of $[\mathcal R_0]$ and  we have $\mu(\supp U)<\epsilon$.\\

We now let $B:=\left(\bigsqcup_{n\in\N}\bigsqcup_{i=-n}^nT_0^i(A_n)\right)\cup\supp U$. Note that by construction $\mu(B)<1-\frac {p+2}pc$.
Let $D_1,...,D_{p+2}$ be pairwise disjoint subsets of $X\setminus B$, of measure $\frac c p$ each. For all $i\in\{1,...,m\}$ we use Proposition \ref{isopar} to pre- and post-compose the partial isomorphisms of $\Phi_i$ by elements in $[[\mathcal R_0]]$ so that each $\Phi_i$ becomes a pre-$(p+1)$-cycle $\Phi_i=\{\varphi_1^i,\varphi_2^i,...,\varphi_p^i\}$ where $\varphi^i_j: D_j\to D_{j+1}$ for all $j\in\{1,...,p\}$. Note that this operation preserves the fact that $\mathcal R$ is generated by $\{T_0\}\cup\Phi_1\cup\cdots\cup \Phi_m$. 

Now choose $\psi\in[[\mathcal R_0]]$ with domain $D_{p+1}$ and range $D_{p+2}$, and add it to every $\Phi_i$. We get $m$ pre-$(p+2)$-cycles $\tilde\Phi_i=\Phi_i\cup\{\psi\}$, and $\{T_0\}\cup\tilde\Phi_1\cup\cdots\cup \tilde\Phi_n$ still generates $\mathcal R$. Consider the associated $(p+2)$-cycles $C_{\tilde\Phi_i}$.

\begin{claim}
The $m+2$ elements $T_0,U,C_{\tilde\Phi_1},...,C_{\tilde\Phi_m}$ generate a dense subgroup of the full group of $\mathcal R$.
\end{claim}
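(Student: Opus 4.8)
The plan is to show that the closed subgroup $G := \overline{\la T_0, U, C_{\tilde\Phi_1},\dots,C_{\tilde\Phi_m}\ra}$ equals $[\mathcal R]$. I would proceed in two stages. First, I would recover the full group of the hyperfinite piece $[\mathcal R_0]$ inside $G$; second, I would use the cycles $C_{\tilde\Phi_i}$ together with $[\mathcal R_0]$ and Theorem \ref{ktdense} to conclude density in $[\mathcal R]$. The key structural fact to exploit is that the whole construction was engineered so that $U$ and the cycles act on disjoint regions and with control over orbit cardinalities.

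**Recovering the odometer relation.**

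By Theorem \ref{thm:topogenR0}, the pair $\{T_0, U\}$ already generates a dense subgroup of $[\mathcal R_0]$; this was arranged in the construction, so I would simply invoke it. Thus $[\mathcal R_0]\subseteq G$. In particular, for each $i$ the induced transformations on the blocks $D_j$ coming from $[[\mathcal R_0]]$ and the odometer's cycles are available inside $G$.

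**Extracting each cycle from the product via the coprimality corollary.**

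The main obstacle, as I see it, is that the elements $C_{\tilde\Phi_i}$ are $(p+2)$-cycles whose supports $D_1\sqcup\cdots\sqcup D_{p+2}$ all overlap each other, so Corollary \ref{cor:obtain elts with disjt sup} does not apply to the $C_{\tilde\Phi_i}$ directly. The way around this is the reason $p$ was chosen \emph{odd} and the extra block $D_{p+2}$ (hence the partial isomorphism $\psi$) was added: one wants to realize each $C_{\tilde\Phi_i}$ as a product of commuting pieces with coprime orbit sizes so that Proposition \ref{isopgen} can be brought to bear. Concretely, I would use Proposition \ref{isopgen}, which states that $[\mathcal R_{\tilde\Phi_i}]$ is topologically generated by $[\mathcal R_{\{\varphi^i_1\}}]\cup\{C_{\tilde\Phi_i}\}$. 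Since $\varphi^i_1:D_1\to D_2$ and $[\mathcal R_0]$ already lies in $G$, if I can show each $C_{\tilde\Phi_i}\in G$ then I obtain $[\mathcal R_{\tilde\Phi_i}]\subseteq G$ for every $i$, because $[\mathcal R_{\{\varphi^i_1\}}]$ is built from $\varphi^i_1$ which is itself obtainable using $T_0$-generated isomorphisms on the disjoint blocks $D_j$ that live off $B$. The delicate point is therefore isolating each individual $C_{\tilde\Phi_i}$: I expect the argument to conjugate or combine the cycles using the ergodic-isomorphism freedom of Proposition \ref{isopar} on the blocks, together with the coprimality mechanism of Corollary \ref{cor:obtain elts with disjt sup} applied after repositioning the cycles onto disjoint supports with orbit-cardinalities powers of the chosen odd $p$ and of $2$ (coming from the odometer). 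This is where the \emph{odd} choice of $p$ pays off, guaranteeing relative primality with the dyadic orbit sizes.

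**Joining everything to get $[\mathcal R]$.**

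Once $[\mathcal R_0]\subseteq G$ and $[\mathcal R_{\tilde\Phi_i}]\subseteq G$ for all $i\in\{1,\dots,m\}$, I would appeal to Theorem \ref{ktdense}: the closed subgroup generated by $\bigcup_i [\mathcal R_{\tilde\Phi_i}]\cup[\mathcal R_0]$ is dense in $[\mathcal R_{\text{join}}]$, where the join of $\mathcal R_0$ and the $\mathcal R_{\tilde\Phi_i}$ is exactly $\mathcal R$, since $\{T_0\}\cup\tilde\Phi_1\cup\cdots\cup\tilde\Phi_m$ was shown to generate $\mathcal R$. Therefore $G = [\mathcal R]$, which is the claim. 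The two genuinely substantive steps are the invocation of Theorem \ref{thm:topogenR0} for the hyperfinite part and the coprimality extraction of the individual cycles; everything else is assembling known density theorems.
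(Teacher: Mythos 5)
Your overall skeleton (recover $[\mathcal R_0]$ from $\{T_0,U\}$, deduce $[\mathcal R_{\tilde\Phi_i}]\subseteq G$ via Proposition \ref{isopgen}, and conclude with Theorem \ref{ktdense} applied to the join) is the same as the paper's, but the middle step is carried out incorrectly, and you have also manufactured a difficulty that is not there. The elements $C_{\tilde\Phi_1},\dots,C_{\tilde\Phi_m}$ are among the $m+2$ generators listed in the claim, so they belong to $G$ by definition; no ``extraction'' via Corollary \ref{cor:obtain elts with disjt sup}, coprimality, or repositioning of supports is needed at this stage. That coprimality mechanism is only used in the subsequent step of the main proof, where the generators are merged into the products $UV_1C_{\tilde\Phi_1}$ and $V_iC_{\tilde\Phi_i}$ and the individual factors must be recovered from those products; the oddness of $p$, which you invoke here, is likewise only needed there.

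The genuine gap is in your application of Proposition \ref{isopgen}. You apply it with the partial isomorphism $\varphi^i_1\colon D_1\to D_2$ and assert that $[\mathcal R_{\{\varphi^i_1\}}]\subseteq G$ because $\varphi^i_1$ is ``obtainable using $T_0$-generated isomorphisms'' on the blocks $D_j$. This is false: $\varphi^i_1$ is a partial isomorphism coming from the original graphing $\Phi_i$, merely pre- and post-composed with elements of $[[\mathcal R_0]]$ to adjust its domain and range; it still carries the part of $\mathcal R$ not captured by $\mathcal R_0$. Indeed, if every $\varphi^i_j$ lay in $[[\mathcal R_0]]$, then $\mathcal R_{\tilde\Phi_i}$ would be contained in $\mathcal R_0$ and the join could not be $\mathcal R$. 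The correct move --- and the entire reason the extra block $D_{p+2}$ and the map $\psi$ were introduced --- is to apply Proposition \ref{isopgen} with the index corresponding to $\psi$: since $\psi$ was chosen in $[[\mathcal R_0]]$, one has $[\mathcal R_{\{\psi\}}]\subseteq[\mathcal R_0]\subseteq G$, and since $\psi\in\tilde\Phi_i$ and $C_{\tilde\Phi_i}\in G$, Proposition \ref{isopgen} yields $[\mathcal R_{\tilde\Phi_i}]\subseteq G$ directly. With that substitution the remainder of your argument (the join and Theorem \ref{ktdense}) goes through.
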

\begin{proof}
Let $G$ be the closed group generated by $\{T_0,U,C_{\tilde\Phi_1},...,C_{\tilde\Phi_m}\}$.
Recall that $T_0$ and $U$ have been chosen so that they generate together a dense subgroup of $[\mathcal R_0]$, so $G$ contains $[\mathcal R_0]$. 
 
Because $\psi$ is a partial isomorphism of $\mathcal R_0$, we have $[\mathcal R_{\{\psi\}}]\subseteq[\mathcal R_0]\subseteq G$. 
Since for all $i\in\{1,...,m\}$ we have $\psi\in\tilde\Phi_i$ and $C_{\tilde\Phi_i}\in G$, Proposition \ref{isopgen} implies that $G$ contains $[\mathcal R_{\tilde\Phi_i}]$.
But $\mathcal R$ is the join of $\mathcal R_0$, $\mathcal R_{\tilde\Phi_1},...,\mathcal R_{\tilde\Phi_m}$, so by Theorem \ref{ktdense} we are done.
\end{proof}

For each $n\in\N$, let $A'_n$ and $A''_n$ be two non-null disjoint subsets of $A_n$ such that $A_n=A'_n\sqcup A''_n$. Let $q\in\N$ be an odd prime number which does not divide $p+2$.
By \cite{MR0036767}, the group $\mathbb F_m$ is a residually $q$-finite group, so we can find an asymptotically free sequence of pointed $\mathbb F_m$-actions $(X_n,\alpha_n,o_n)$ such that for all $n\in\N$ and all $\lambda\in\mathbb F_m$, the permutation $\alpha_n(\lambda)$ has order $q^k$ for some $k\in\N$. 

Let $A=\bigsqcup_{n\in\N}\bigsqcup_{i=-n}^nT_0^i(A'_n)$. We now apply Theorem \ref{thm:highly faithful rf} to $\Gamma=\Z$ through the action induced by $T_0$, $\Lambda=\mathbb F_m$, the sequence of actions $(X_n,\alpha_n,o_n)$ and the sequence of sets $(A'_n)$ such that the sequence $(T_0^i(A'_n))_{i=-n}^n$ is made of disjoint sets. We thus obtain a $\mathbb F_m$-action supported on $A$ which preserves the $\mathcal R$-classes and
satisfies the following conditions.
\begin{enumerate}[(1)]
\item The induced $\Z*\mathbb F_m$-action is highly faithful.
\item The $\mathbb F_m$-action is supported on $A$ and has only finite orbits.
\item For all $x\in X$, there exists $n\in\N$ such that the $\mathbb F_m$-action on the $\mathbb F_m$-orbit of $x$  is conjugate to $\alpha_n$. 
\item \label{cond: Fm action same implies hf}Any $\mathbb F_m$-action whose restriction to $A$ coincides with this action will induce a highly faithful $\Z*\mathbb F_m$-action.
\end{enumerate}
The $\mathbb F_m$-action we just obtained is determined by the elements of the full group induced by its standard generators which we denote by $V_1,...,V_m\in[\mathcal R]$.

By our hypothesis on the sequence of actions on finite sets $(\alpha_n)$, for all $i\in\{1,...,m\}$, every $V_i$-orbit has cardinality $q^k$ for some $k\in\N$. Moreover, we have that $U$, $V_1$ and $C_{\tilde\Phi_1}$ have disjoint supports and that for all $i\in\{2,...,m\}$, $V_i$ and $C_{\tilde\Phi_i}$ have disjoint supports.

By corollary \ref{cor:obtain elts with disjt sup}, the elements $U$ and $C_{\tilde\Phi_1}$ belong to the closure of the group generated by $UV_1C_{\tilde\Phi_1}$, and for all $i\in\{2,...,m\}$, $C_{\tilde\Phi_i}$ belongs to the closure of the group generated by $V_iC_{\tilde \Phi_i}$. So by the previous claim the group generated by the $m+1$ elements $$ T_0,UV_1C_{\tilde\Phi_1},V_2C_{\tilde\Phi_2},...,V_mC_{\tilde\Phi_m}$$ is a dense subgroup of the full group of $\mathcal R$. Let us show that the associated $\mathbb F_{m+1}$-action has all the desired properties. 

First, the fact that the $m$ last generators $UV_1C_{\tilde\Phi_1},V_2C_{\tilde\Phi_2},...,V_mC_{\tilde\Phi_m}$ act trivially on $\bigsqcup_{n\in\N}\bigsqcup_{i=-n}^nT_0^i(A''_n)$ implies that for almost all $x\in X$, the restriction of  the Schreier graph of the $\mathbb F_{m+1}$-action  on the orbit of $x$ contains arbitrarily long intervals, so the $\mathbb F_{m+1}$-action is amenable onto almost every orbit.

Then recall that $V_1$,...,$V_m$ induce an $\mathbb F_m$-action which satisfies conditions (1)-(4) above. Moreover, $V_1$ and $UV_1C_{\tilde\Phi_1}$ have the same restriction to $A$ and for all $i\in\{2,...,m\}$, $V_i$ and $V_iC_{\tilde\Phi_i}$ have the same restriction to $A$. By (\ref{cond: Fm action same implies hf}), this implies that that the $\Z*\mathbb F_m=\mathbb F_{m+1}$-action that we have built is highly faithful, which ends the proof of Theorem \ref{thm:main thm}.

\bibliographystyle{alpha}
\bibliography{/Users/francoislemaitre/Dropbox/Maths/biblio}

\end{document}